%
\documentclass[preprint,10pt,3p]{elsarticle}
\usepackage{graphicx,amsmath,amsthm,amssymb,latexsym,amsfonts,color}
\usepackage[bookmarksnumbered, colorlinks, plainpages]{hyperref}
\usepackage{xcolor}
\usepackage{sectsty}
\usepackage{comment}
\usepackage{tabularx,booktabs}
\usepackage{algorithm2e}
\usepackage{booktabs}
\usepackage{hhline,booktabs}
\usepackage{multirow}
\usepackage{soul}
\usepackage{footnotehyper}
\usepackage{tikz}
\usepackage{natbib}
\usepackage{pifont}

\usepackage[flushleft]{threeparttable}
\usepackage{array,booktabs,makecell}
\usepackage[font=small]{caption}

\definecolor{astral}{RGB}{46,116,181}
\sectionfont{\color{astral}}

\footskip=27pt

\setlength{\textwidth}{15.5cm}
\setlength{\textheight}{22cm}
\setlength{\topmargin}{-0.1cm}
\setlength\oddsidemargin{0.75cm}
\setlength\evensidemargin{0.0cm}

\setcounter{page}{1}

\newtheorem{thm}{Theorem}[section]

\newtheorem{prop}[thm]{Proposition}
\newtheorem{defn}[thm]{Definition}
\newtheorem{rem}[thm]{\bf{Remark}}

\definecolor{fgreen}{rgb}{0.13, 0.55, 0.13}

\setcounter{page}{1}

   \newcommand{\Jj}{\mathbb{J}}
\newcommand{\Cc}{\mathbb{C}}
\newcommand{\Rr}{\mathbb{R}}
\newcommand{\Mm}{\mathbb{M}}

\newcommand{\Aa}{\mathfrak{a}}
\newcommand{\BB}{\mathfrak{b}}

\newcommand{\DD}{\mathfrak{d}}
\newcommand{\GG}{\mathfrak{g}}
\newcommand{\HH}{\mathfrak{h}}
\newcommand{\JJ}{\mathfrak{j}}
\newcommand{\GKK}{\mathfrak{k}}

\newcommand{\GL}{\mathfrak{gl}}
\newcommand{\SL}{\mathfrak{sl}}

\newcommand{\SO}{\mathfrak{so}}
\newcommand{\SP}{\mathfrak{sp}}

\newcommand{\PP}{\mathfrak{p}}
\newcommand{\RR}{\mathfrak{r}}
\newcommand{\LT}{\mathfrak{lt}}
\newcommand{\SLT}{\mathfrak{slt}}
\newcommand{\SUT}{\mathfrak{sut}}
\newcommand{\UT}{\mathfrak{ut}}
\newcommand{\WW}{\mathfrak{w}}

\DeclareMathOperator{\Tr}{Tr}
\DeclareMathOperator{\Diag}{diag}

\DeclareMathOperator{\Det}{det}

\begin{document}
	
	\date{}
	

\begin{frontmatter}\title{A Lie algebra view of matrix splittings\\
\bigskip
                  {\bf {\small Dedicated to Daniel Szyld on the occasion of his 70th birthday}}}
		
		\author[add1]{Michele Benzi}
		\author[add2]{Milo Viviani}

		\address[add1]{Scuola Normale Superiore, Piazza dei Cavalieri, 7, 56126, Pisa, Italy {\tt (michele.benzi@sns.it)}.}
		\address[add2]{Scuola Normale Superiore, Piazza dei Cavalieri, 7, 56126, Pisa, Italy {\tt (milo.viviani@sns.it)}.}

		\begin{abstract}
		In this paper we use some basic facts from the theory of (matrix) Lie groups and algebras to show that many of the classical
		matrix splittings used to construct stationary iterative methods and preconditioniers for Krylov subspace methods can be interpreted as 
		linearizations of matrix factorizations. Moreover, we show that
		new matrix splittings are obtained when we 
		specialize these splittings to some of the classical matrix groups and their Lie and Jordan algebras. As an example,  we derive structured generalizations
	        of the HSS (Hermitian and skew-Hermitian splitting) iteration, and provide sufficient conditions for their convergence.
	        \end{abstract}
		
		\begin{keyword}
	Matrix decompositions, matrix splittings, Lie groups, Lie algebras, Jordan algebras.
		\end{keyword}

	\end{frontmatter}
	\noindent 2020 {\it Mathematics subject classification\/}: 	
	65F10,  15A23, 22-8

	\section{Introduction}\label{sec:1}  Many classical algorithms of numerical linear algebra are based on matrix decompositions. These 
	decompositions can be either multiplicative or additive; following standard terminology, herein we will refer to the former type as  
	\textit{matrix factorizations}, and to the latter one as 
	\textit{matrix splittings}. \\
	
		Matrix factorizations are at the basis of many of the most efficient algorithms for solving linear systems and for computing eigenvalues (or 
	singular values) of matrices. The LU, Cholesky, and QR (or LQ) factorizations are among the best known matrix decompositions \cite{GVL4}.  Additive
	decompositions, or splittings, 
	of the form $A=B+C$ (frequently written in the form $A=B-C$) are used to define stationary iterative methods, including 
	the classical Jacobi and Gauss-Seidel iterations \cite{Varga}, and to construct preconditioners used in conjunction with Krylov subspace methods \cite{Saad}.
	Splitting methods are also widely used in the numerical solution of systems of ODEs, particularly on manifolds,  and semi-discretized PDEs
	\cite{MacLachlan2010}.  \\
	
	It has long been recognized that many matrix factorizations can be interpreted in terms of decompositions of a matrix group as a product
	of subgroups.\footnote{One of the first papers drawing the attention
	of numerical analysts to the relation of matrix factorizations to group decompositions is \cite{DellaDora}.}
	Matrix groups endowed with additional (topological or differentiable) structure are fundamental objects in many areas of mathematics,
	from Lie theory to numerical analysis \cite{Iserles2000}, and the connection between matrix factorizations and different representations of  a matrix group as a product
	of subgroups is fairly standard, though not widely used in numerical linear algebra. \\
	
	In this paper we show that just as many matrix factorizations can be understood in terms of (multiplicative) decompositions of a matrix Lie group as a product of subgroups,
	certain matrix splittings can similarly be understood in terms of (additive) decompositions of the corresponding Lie algebra (the tangent space of the
	Lie group at the identity).  In some cases the Lie algebra of the group splits as a sum
	of Lie subalgebras which corresponds to subgroups of the matrix group. In others, as we will see,
	the Lie algebra splits as the sum of a Lie subalgebra and a Jordan algebra. This second situation arises when one of factors in the matrix factorization
	comes from a subgroup while the other comes from a vector space of matrices. 
	 Moreover, many additive splitting can be obtained as the 
	linearization of an underlying multiplicative factorization of the underlying matrix group. Hence, many classical matrix splittings appear in a new 
	light: they are \textit{infinitesimal counterparts} of well-known matrix factorizations. To  the best of our knowledge this observation, while simple, 
	has not been explicitly made before in the numerical analysis literature. \\
	
	To informally describe what this paper is about, we recall that 
	passing from a matrix Lie group $G$ to the tangent space at the identity (i.e., linearizing $G$ near $I_n$) results in a vector space $\GG$  of matrices which, endowed with the
	commutator product  $[A,B] = AB-BA$, becomes a Lie algebra.  Consider now a smooth path of matrices $t\mapsto A(t)$ in $G$ with $A(0) = I_n$
	and a smooth factorization 
        \begin{equation}\label{decomp}
	A(t) = B(t)C(t),
	\end{equation}
	where $B(t)$ and $C(t)$ are smooth paths entirely lying in submanifolds  $H, K\subset G$ for $|t|$ small and such that
	$B(0) = C(0) = I_n$. Differentiating (\ref{decomp}) for  $t=0$ yields
	\begin{equation}\label{split}
	A'(0) = B'(0) C(0) + B(0)C'(0) = B'(0) + C'(0)\,,
	\end{equation}
	and we see that the tangent vector $A'(0)$ to $G$ at the identity, an element of the Lie algebra $\GG$ of $G$, decomposes as the sum of two matrices $B'(0$) and $C'(0)$, one tangent to 
	$H$ and the other tangent to $K$ at the identity. In other words, the Lie algebra $\GG$ splits as the sum of two vector subspaces, which in turn may be endowed 
	with additional algebraic structure. For instance, if $H$ and $K$ are matrix subgroups of $G$, the corresponding tangent spaces $\HH$ and $\GKK$ are themselves Lie algebras;
	if $H\cap K = \{I_n\}$, then $\HH \cap \GKK = \{0\}$ and we obtain that $\GG$, as a vector space, admits the direct sum decomposition  $\GG = \HH \oplus \GKK$. \\
	
	When $G = GL(n, \Rr)$, the general linear group, the corresponding Lie algebra $\GG$, denoted with $\GL_n$, is just the total matrix space $\Rr^{n\times n}$ 
	endowed with the commutator product. Any mutiplicative decomposition of $G$ into a product of submanifolds leads to a corresponding additive splitting of $\GL_n$,
	and the structure of the matrices $B$ and $C$ in the splitting $A=B+C$ will depend on the choice of $H$ and $K$. For instance, 
	linearization of the \textit{polar factorization} of
	$GL(n,\Rr)$  leads to the \textit{symmetric and skew-symmetric splitting} of a generic $n\times n$ matrix, an instance of the \textit{Cartan decomposition};
	see subsection \ref{sec:Cartan}. 
	Furthermore, this splitting, widely used in numerical linear algebra 
	(e.g.,  \cite{Bai,BaiBook,Benzi2004,Concus,Szyld2022,Widlund}), corresponds to a decomposition of $\GL_n$ into subspaces that are 
	mutually orthogonal with respect to the Frobenius inner product. One of these subspaces carries a Lie algebra structure, the other one a Jordan algebra
	structure. This provides  us with a canonical example of what we call here a \textit{Lie-Jordan splitting}; see Section \ref{sec:4}. \\
	
	In this paper we consider several different group decompositions and derive the corresponding matrix splittings.
	 In some cases, instead of considering a global factorization of $G$, it will be convenient to localize the discussion near the identity $I_n$, i.e., to
	 consider a decomposition of an open neighborhood of the identity matrix. Since the 
	linearization $\GG$ of $G$ consists of all the tangent vectors at the identity, this still leads to an additive splitting for every matrix in $\GG$.\\
	
	Instead of starting from a multiplicative  group decomposition in order to obtain a class of additive matrix splittings via linearization, it is also possible to proceed in the reverse order:
	starting from a matrix splitting, we can try to determine the corresponding matrix factorization, or group decomposition. This passage from local to global can be interpreted as
	integration, as opposed to differentiation, and just as indefinite integration leads to a non-unique solution (up to additive constants), there may be more than one group factorization
	leading to the same splitting of the tangent space (Lie algebra). Another way to see this is to recall that while the Lie algebra of a Lie group is uniquely determined, 
	being the tangent space  to the Lie group at the identity, there can be
	more than one Lie group with the same Lie algebra: for instance, the orthogonal group $O(n,\Rr)$ and the special orthogonal group 
	$SO(n,\Rr)$ both have $\SO_n$, the algebra of skew-symmetric matrices, as their Lie algebra. 
	In the following, sometimes we will start from a matrix factorization and derive the corresponding matrix splitting, while in others we start from the splitting, regarded as an additive
	decomposition of a matrix Lie algebra, and then discuss one or more group decompositions, i.e., matrix factorizations,
	the linearization of which results in the additive splitting.   \\
	
	Classical stationary iterative 
	methods, like Jacobi and Gauss-Seidel, are based on the splittings of $A$ in terms of the diagonal, strictly lower, and strictly upper triangular parts \cite{Varga}.
	As we will see, these classical splittings are infinitesimal counterparts, or linearizations, of the LU or (LDU) factorization. 
	Here again we find that linearization leads to a splitting of the Lie algebra of all $n\times n$ matrices into a direct sum of subspaces, which in the case of 
	Gauss-Seidel is an orthogonal decomposition relative to the
	Frobenius inner product.\\
	
	 We also examine other matrix splittings, for example those corresponding to 
	 the QR factorization (a special case of the \textit{Iwasawa decomposition} of a Lie group, see \cite{Helgason,Knapp}), as well as others which generalize the 
	 symmetric and skew-symmetric
	 splitting.\\
	 
	 The two main contributions of the paper are the following. First, we provide a link (of a geometric nature) between some of the classical matrix decompositions and the corresponding 
	 matrix splittings, which has been hitherto overlooked in the numerical linear algebra literature; and second, we present new matrix splittings, in particular some
	 generalizations of the well-known HSS method \cite{Bai}, which may prove useful in the construction of iterative solvers and preconditioners for block linear systems. 
	Some of these splittings may also find appplication, for example, 
	in developing structure-preserving ODE or eigenvalue solvers. \\		
		
	The remainder of the paper is organized as follows. In Section \ref{sec:2} we provide basic background information on Lie groups, Lie algebras and Jordan algebras,
	primarily restricted to the classical (matrix) groups and their algebras.
	In Section \ref{sec:3} we survey several types of matrix splittings and interpret them as linearizations of  well-known matrix factorizations.  
	In Section \ref{sec:4} we discuss the class of Lie--Jordan splittings
	and in Section \ref{sec:LJ} we establish a general convergence criterion for what we call the $J$-HSS method
	(a generalization of the standard HSS iteration \cite{Bai}), and provide examples of this method.  
	Section \ref{sec:6} contains brief discussions of two alternating-type
	iterations and the corresponding splittings.  Some conclusions and suggestions for future work are given in Section \ref{sec:7}.

	\section{Preliminaries and background}\label{sec:2}    
		In this Section we recall some basic notions about matrix Lie groups, Lie algebras and Jordan algebras, with the aim of introducing notations and making the paper
	as self-contained as possible.  For ease of exposition, in this paper we limit ourselves to real matrices, but much of the material in the
	paper can be extended to the complex case. Our treatment is elementary and informal; we refer the reader to, e.g., 
	\cite{Helgason,Knapp,KN,McCrimmon,Postnikov} for thorough and general accounts of the theory of Lie groups and algebras, differentiable manifolds,
	and Jordan algebras.\\   
	
	Let $GL(n,\Rr)$ be the general linear group of degree $n$ over $\Rr$, i.e., the set of all real invertible $n\times n$ matrices 
	endowed with the standard matrix product.  We note that $GL(n,\Rr)$ is both a group and a differentiable manifold (of dimension $n^2$),
	and that the mappings  $(A,B)\mapsto AB$ and $A\mapsto A^{-1}$ are smooth. These facts can be condensed in the statement that $GL(n,\Rr)$ 
	is a (matrix) Lie group.\\
	
	The tangent space to $GL(n,\Rr)$ at the identity $I_n$ 
	coincides with
	the whole space $\Rr^{n\times n}$.  To see this, take $A\in \Rr^{n\times n}$, consider (for example) the smooth path $t\mapsto A(t) \in GL(n,\Rr)$ defined by $A(t) = \exp(tA)$,
	and observe that $A(0)=I_n$, $A'(0) = A$. Hence, every $A\in \Rr^{n\times n}$ is a tangent vector to $GL(n,\Rr)$ at the identity.\\
	
	With respect to the usual matrix operations, $\Rr^{n\times n}$ is an associative, non-commutative algebra with unit.  
	However, we can also introduce two other types of products in $\Rr^{n\times n}$: the commutator product $[A,B] = AB-BA$, and the Jordan
	product $A\bullet B=\frac{1}{2}(AB+BA)$.  The commutator product does not admit an identity element, is anticommutative ($[B,A] = - [A,B]$), 
	nilpotent ($[A,A] = 0$) 
	and is non-associative; however, it satisfies a weak version of associativity, the {\textit {Jacobi identity}}:
	$$[[A,B], C] + [[C,A], B] + [[B,C],A] = 0 \quad \forall A, B, C, \in \Rr^{n\times n}.$$
	
	The Jordan product admits $I_n$ as identity element, is commutative, satisfies $A\bullet A = A^2$, and is non-associative; it satisfies a weak version
	of associativity, the {\textit {Jordan identity}}:
	$$(A\bullet B)\bullet (A\bullet A) = A\bullet (B\bullet (A\bullet A)) \quad \forall A, B \in \Rr^{n\times n}.$$
	
	We express these properties by saying that $\Rr^{n\times n}$ is a {\textit{Lie algebra}} with respect to the commutator product, and a {\textit{Jordan algebra}}
	with respect to the Jordan product.  We will denote the Lie algebra of all real $n\times n$ matrices with the commutator product by $\GL_n$, and the Jordan
	algebra of all real $n\times n$ matrices with the commutator product by $\Mm(n,\Rr)$.\\
	
	We will be dealing with vector subspaces of $\Rr^{n\times n}$ which are also Lie subalgebras or Jordan subalgebras. Prominent Lie subalgebras
	of $\GL_n$ are $\SL_n$, the Lie algebra of
	all $n\times n$ matrices with zero trace, and $\SO_n$, the Lie algebra of all $n\times n$  skew-symmetric matrices: note that 
	$\SO_n \subset \SL_n \subset \GL_n$.  The Lie algebra $\SL_n$ is the tangent space at the identity of the {\textit{special linear group}} $SL(n,\Rr)$,
	consisting of all the real $n\times n$ matrices with unit determinant, while $\SO_n$ is the Lie algebra of the 
	{\textit{special orthogonal group}} $SO(n,\Rr)$, which consists of all the real $n\times n$ orthogonal matrices with unit determinant. This group is the
	connected component (containing the identity) of the group $O(n,\Rr)$ of all  the real $n\times n$ orthogonal matrices, therefore $\SO_n$ is also the Lie algebra (tangent space) of $O(n,\Rr)$.
	The subgroups $O(n,\Rr)$ and $SO(n,\Rr)$ are topologically closed subgroups of the Lie group $GL(n,\Rr)$ and therefore they are themselves Lie groups.
	The Lie algebra of a Lie group is often thought of as the {\textit{linearization}} of the Lie group near the identity.\\
	
	In the course of the paper we will also deal with groups and algebras of triangular and diagonal matrices.  The groups $LT(n,\Rr)$ and $UT(n,\Rr)$ of all invertible
	lower (respectively, upper) triangular matrices are Lie groups; their Lie algebras are easily seen to be the (not necessarily invertible) lower and upper triangular matrices,
	denoted here by $\LT_n$ and $\UT_n$, respectively. Important subgroups of $LT(n,\Rr)$ are $LT_+(n,\Rr)$ (lower triangular matrices with positive diagonal
	entries), $SLT(n,\Rr) = SL(n,\Rr)\cap LT(n,\Rr)$ (lower triangular matrices with unit determinant), and $LT_1(n,\Rr)$ (unit lower triangular matrices).  
	The subgroups $UT_+(n,\Rr)$, $SUT(n,\Rr)$ and $UT_1(n,\Rr)$ of $UT(n,\Rr)$ are defined analogously.  The Lie algebra of $LT_+(n,\Rr)$ is easily seen to be again $\LT_n$,
	the Lie algebra of $SLT(n,\Rr)$ is $\SLT_0 = \SL_n\cap \LT_n$ (the lower triangular matrices with zero trace), and 
	the Lie algebra of $LT_1(n,\Rr)$ is the nilpotent Lie subalgebra of the strictly lower triangular matrices, $\LT_0\subset \LT_n$. The Lie algebra of upper triangular matrices
	with zero trace will be denoted by $\SUT_0$ and the nilpotent Lie algebra of all strictly
	upper triangular matrices  by $\UT_0$.\\
	
	We will also use $\DD$ and $\DD_0$ to denote the Lie algebras of all diagonal matrices and diagonal matrices with zero trace, respectively. They are the Lie algebras
	of the groups of all invertible diagonal matrices (and of its subgroup of all diagonal matrices with positive diagonal entries) and of the group of diagonal matrices with unit
	determinant, respectively.\\
		
	 The most important example of a Jordan subalgebra of $\Mm(n,\Rr)$  is provided by the space of all real $n\times n$ symmetric matrices.  Indeed, the Jordan
	 product was introduced (by German quantum physicist Pascual Jordan) as a way to ``symmetrize" the product of two possibly non-commuting symmetric 
	 operators.\footnote{Note that $A\bullet B = AB$ whenever $AB=BA$.}  We denote this Jordan subalgebra by $\PP_n$.\\
	 
	 The real orthogonal group $O(n,\Rr)$ and its subgroup $SO(n,\Rr)$ are examples of  {\textit{classical groups}}.  The group $O(n,\Rr)$ can be characterized as the
	 group of linear transformations $Q$ on $\Rr^n$ such that $\langle Qx, Qy\rangle = \langle x, y\rangle$ for all $x,y \in \Rr^n$, where $\langle x, y\rangle = x^Ty$ denotes
	 the standard (Euclidean) inner product.  Other examples of classical groups are the {\textit {pseudo-orthogonal group}} $O(p,q,\Rr)$ (where $p+q=n$) of all linear
	 transformations on $\Rr^n$  which preserve the indefinite inner product $x^T I_{p,q}y$, where $I_{p,q} = \Diag(I_p, -I_q)$, and the  {\textit {symplectic group}} $Sp(n,\Rr)$ of all 
	 linear transformations on $\Rr^{2m}$ which preserve the symplectic form $x^T \Sigma_n y$, where $n=2m$ and
	  $$\Sigma_n = \left[ {\begin{array}{*{20}{c}}
				0 & I_m\\
				-I_m& 0
		\end{array}}\right ] \,.$$
	We also have the group of pseudo-orthogonal matrices with unit determinant, $SO(p,q,\Rr)$.  (Note that symplectic matrices always have unit determinant.)  The Lie algebra of
	the group $O(p,q,\Rr)$ (and also of $SO(p,q,\Rr)$) consists of all the pseudo-skew-symmetric matrices, while the Lie algebra of $Sp(n,\Rr)$ consists of all the Hamiltonian 
	matrices; these algebras are described in Section \ref{sec:4}, where we also describe the corresponding Jordan algebras, namely,  the pseudo-symmetric and skew-Hamiltonian matrices,
	respectively.\\

        A  connected Lie group that does not contain any non-trivial, connected, Abelian normal subgroup is said to be {\textit{a semisimple Lie group}}.  The groups $GL(n,\Rr)$
        and $O(n,\Rr)$ are not connected and therefore they are not semisimple.    The connected Lie group $SO(n,\Rr)$, on the other hand, is semisimple, as is the symplectic 
        group $Sp(n,\Rr)$. \\
       
        The corresponding notion for a Lie algebra $\GG$  is defined as follows.  If $\Aa$ and $\BB$ are vector subspaces of $\GG$, denote by $[\Aa,\BB]$ the subspace generated by the 
        elements of the form $[A,B]$ with $A\in \Aa$ and $B\in \BB$. The property of $\Aa$ being a Lie subalgebra of $\GG$ can then be expressed as $[\Aa,\Aa] \subseteq \Aa$.
        The condition $[\Aa,\GG]\subseteq \Aa$ expresses the property of $\Aa$ being a {\textit{Lie ideal}} of $\GG$. As an example, the space $\LT_0$ of strictly lower triangular 
       matrices is a Lie ideal of the Lie  algebra $\LT_n$ of all lower triangular matrices.  Clearly, any Lie ideal is a Lie subalgebra of $\GG$.  A Lie subalgebra $\Aa$
        is $Abelian$ if $[A,B] = 0$ for all $A, B\in \Aa$.  A Lie algebra that does not contain any non-trivial Abelian ideals is said to be a {\textit{semisimple Lie algebra}}. A basic result
        in Lie theory is that {\textit{a Lie group is semisimple if and only if its Lie algebra is semisimple}}. Hence, $\SL_n$ is semisimple, while $\GL_n$ is not: the set of all scalar matrices 
        $k I_n$ ($k\in \Rr)$ is a nontrivial Abelian ideal in $\GL_n$.  Denoting by $\RR$ this ideal, the Lie algebra $\GL_n$ decomposes as
        \begin{equation}\label{Levi}
        \GL_n = \SL_n \oplus \RR\,.
        \end{equation}
        The decomposition (\ref{Levi}) is known as the {\textit{Levi decomposition}} of $\GL_n$.  More generally, every Lie algebra $\GG$  admits a Levi decomposition,
        i.e., it can be written as the direct sum of a semisimple Lie subalgebra and the {\textit{radical}} $\RR$ of $\GG$ (we refer to \cite{Helgason} or \cite{Postnikov} for the definition
        of the radical of a Lie algebra). The importance of the Levi decomposition is that it can be used to determine the structure of Lie groups and algebras by reducing it to the semisimple case,
        which is much more tractable.

	\section{Matrix factorizations and matrix splittings}\label{sec:3}
	
	In this Section we discuss several matrix splittings of interest in numerical linear algebra and describe how they correspond to linearizations of well known
	matrix factorizations. 

         \subsection{Cartan decomposition} \label{sec:Cartan}
         One of the most important decompositions of a Lie algebra (and of the corresponding Lie  group) is the \textit{Cartan decomposition}; see for example
         \cite{Helgason,Knapp}.  Rather than giving the general definition, here we describe it in the special cases we are mostly interested in.  \\
         
         Let us consider the map $\theta: \GL_n \rightarrow \GL_n$ defined by $\theta (X) = - X^T$.  Then $\theta$ is a Lie algebra automorphism which satisfies 
         $\theta^2 = \text{Id}$, i.e., it is an involution. Hence, $\theta$ has two eigenvalues, $+1$ and $-1$. The corresponding eigenspaces are, respectively, the Lie
         subalgebra of all skew-symmetric matrices $\SO_n$ and the subspace $\PP_n$ of all $n\times n$ real symmetric matrices. This results in the decomposition
         \begin{equation}\label{Cartan_alg}
         \GL_n =  \SO_n \oplus \PP_n\,,
         \end{equation}
         which is a Cartan decomposition of $\GL_n$.\footnote{Here we follow the example of \cite{Edelman2} and commit a slight abuse of language:  strictly speaking,
         the term \textit{Cartan decomposition}  refers to the corresponding decomposition of the semisimple Lie algebra  $\SL_n
         =\SO_n \oplus \PP_0$, where $\PP_0$ is the subspace of all real symmetric matrices with zero trace.  Since $\SO_n$ is the Lie algebra corresponding to both $O(n,\Rr)$ and  $SO(n,\Rr)$, this abuse is inconsequential for our
         purposes. Furthermore, recalling the Levi decomposition $\GL_n = \SL_n \oplus \RR$ and observing that $\PP_n= \PP_0 \oplus \RR$, we conclude
         that (\ref{Cartan_alg}) is equivalent to the ``genuine" Cartan decomposition $\SL_n=\SO_n \oplus \PP_0$.
        }
         Note that (\ref{Cartan_alg}) is simply the statement that any $n\times n$ real matrix $A$ can be written uniquely as the sum of a skew-symmetric and a symmetric matrix:
         \begin{equation}\label{HSS}
	A = \mathcal{S}(A) + \mathcal{H}(A)\,,
	\end{equation}
	where $\mathcal{S}(A) :=\frac{1}{2}(A - A^T)\in \SO_n$ is the skew-symmetric part of $A$ and 
	$\mathcal{H}(A):=\frac{1}{2}(A +A^T)\in \PP_n$ the symmetric part. However, there is something more to this.  First of all, 
	the two subspaces  $\SO_n$ and $\PP_n$ are mutually orthogonal
	with respect to the Frobenius inner product on $\Rr^{n\times n}$, defined as $\langle A, B\rangle=\Tr(A^TB)$, as is easily checked
	($\Tr(\cdot)$ denotes the trace). Hence, the direct sum (\ref{Cartan_alg}) is actually an
	orthogonal sum. In other words, $\SO_n$ is the tangent space to $SO(n,\Rr)$ at the identity, while $\PP_n$ is the normal space.
	Second, while $\SO_n$ is a Lie subalgebra of $\GL_n$ (and indeed of $\SL_n$), $\PP_n$ is not a Lie algebra; it is, however, 
	a Jordan algebra with respect to the Jordan product $A\bullet B = \frac{1}{2}(AB + BA)$.  \\
	
	As already mentioned in the Introduction, the symmetric and skew-symmetric splitting is the infinitesimal version of the polar decomposition $A=QP$, where $Q$ is orthogonal
	and $P$ is symmetric and positive definite; since $A$ is nonsingular, this factorization is unique.  However, some care is needed here in order to have a 
	precise statement of this fact.  First we restrict our attention
	to the (semisimple) Lie group of all real $n\times n$ matrices with unit determinant,
	$SL(n,\Rr)$; its Lie algebra, $\SL_n$, consists of all real $n\times n$ matrices with zero trace. 
	The \textit{global Cartan decomposition} of the
	special linear group is 
	\begin{equation}\label{Cartan_group_1}
         SL(n,\Rr) =  SO(n,\Rr) \cdot \exp ({\PP_0})\,,
         \end{equation}
         where $\exp ({\PP_0})$ denotes the image under the exponential map of the set of all $n\times n$ real symmetric matrices with zero trace,
         consisting of all symmetric and positive definite matrices $S$ with $\Det S = 1$, since $ \Det \exp (S) = e^{\Tr(S)}$.
         At the matrix level, the decomposition (\ref{Cartan_group_1}) is nothing but the polar factorization $A = QP$
        of a real nonsingular matrix with unit determinant as the product of a special orthogonal matrix and a symmetric positive definite matrix with unit
        determinant. 

         \begin{rem}\label{Global_Cartan}
         More generally, a global Cartan decomposition of a non-compact, semisimple Lie group  $G$ with Lie algebra $\GG$ is defined as follows.
         There is a unique Lie group automorphism $\Theta$  with differential at the identity given by $\theta$ (the Cartan involution of $\GG$) and such that
         $\Theta^2 = {\rm Id}$ (the identity automorphism). Moreover, there is a compact subgroup $K\subset G$ that is left fixed by $\Theta$.  Let $\PP$ be the
         eigenspace of $\theta$ corresponding to the eigenvalue $-1$.The mapping
         $$K \times \PP \longrightarrow G \quad \text{given \, by} \quad (k,X) \mapsto k \cdot \exp(X)$$
         is a diffeomorphism. The idempotent automorphism $\Theta$ is called a {\rm global Cartan involution}. 
        Writing $P= \exp(\PP)$, we have that the product map $K\times P \rightarrow G$ is a diffeomorphism, so that $G=K\cdot P$.
        If $G=SL(n,\Rr)$, the global Cartan involution is $\Theta: X\mapsto (X^{-1})^T$, hence $K=SO(n,\Rr)$ and $\PP = \PP_0$.
         \end{rem}
         
         We can now determine the additive counterpart of the polar decomposition.
         Taking a smooth path $t\mapsto A(t)$ in $SL(n,\Rr)$ and writing $A(t) = Q(t)P(t)$  (smooth paths in $SO(n,\Rr)$ and $\exp ({\PP_0})$, respectively, with $Q(0) = P(0) = I_n$)
        and differentiating for $t=0$ we find 
        \begin{equation} \label{HSS_0}
        A'(0) = Q'(0) + P'(0)\,,
        \end{equation}
        with $Q'(0)\in \SO_n$ and $P'(0)$ symmetric with $\Tr (P'(0)) = 0$, i.e., $Q'(0)$ is the skew-symmetric part of $A'(0)\in \SL_n$ and $P'(0)$ its symmetric part.
        We can therefore write the Cartan decomposition of $\SL_n$ as the infinitesimal  counterpart of (\ref{Cartan_group_1}), namely:
        \begin{equation}\label{Cartan_alg_0}
        \SL_n = \SO_n \oplus \PP_0\,.
        \end{equation}
        This shows that indeed the symmetric and skew-symmetric splitting of a zero-trace matrix is the linearization near the identity of the polar factorization of matrices with 
        unit determinant.\\
        
        At the Lie group level, the global Cartan decomposition also takes the name of {\textit{KAK decomposition}}: there exist a compact subgroup $\mathcal K$ and an Abelian
        subgroup $\mathcal A$ of $G$ such that 
        \begin{equation} \label{KAK}
	G = \cal{K}\cdot \cal{A} \cdot \cal{K}\,,
	\end{equation}
	so that every $g\in G$ can be written as the product $g = k_1\cdot a \cdot k_2$ with $k_1, k_2 \in \mathcal K$ and $a\in \mathcal A$; see \cite{Edelman2}
	for additional information. 
	When $G= SL(n,\Rr)$, ${\mathcal K} = SO(n,\Rr)$ and ${\mathcal A}$ is the subgroup of diagonal matrices with unit determinant we obtain
	the foremost example of KAK decomposition, namely, the singular value decomposition (SVD):
	$$A = U \Sigma V^T\,,$$
	with $U$ and $V$ real orthogonal $n\times n$ matrices and $\Sigma$ the diagonal matrix with the singular values (in non-increasing order) down
	the main diagonal. Rewriting the SVD as $A = (UV^T)\cdot (V\Sigma V^T)$ we recover the polar factorization $A=QP$ with $Q=UV^T\in SO(n,\Rr)$
	and $P=V\Sigma V^T=(A^TA)^{{\frac12}}$ (symmetric positive definite with $\Det P = 1$). Conversely, starting from $A=QP$ with $Q\in SO(n,\Rr)$ and $P=VDV^T$ positive definite
	(where $V\in SO(n,\Rr)$ and $D$ is diagonal positive definite with $\Det D = 1$) we readily obtain $A=U\Sigma V^T$ with $U=QV\in SO(n,\Rr)$ and $\Sigma = D$.
	It must be noted, however, that the infinitesimal variation of the SVD near the identity does not yield the same tangent space decomposition as the
	linearization of the polar decomposition, and does not lead to an additive splitting of the type considered in this paper.\\
        
        So far we have restricted the discussion to matrices with unit determinant.  Let now $A\in GL(n,\Rr)$. 
       Since any matrix sufficiently close to the identity $I_n$ has positive determinant, we can assume $\Det (A) > 0$. 
        For any such matrix, consider the ``trivial" factorization
        \begin{equation}\label{trivial_1}
      A= \frac{A}{(\Det A)^\frac{1}{n}}\cdot (\Det A)^\frac{1}{n}I_n.
      \end{equation}
     Hence, any matrix close to $I_n$ can be written as the product  of a matrix in $SL(n,\Rr)$ times a matrix of the form $\alpha I_n$, with $\alpha > 0$. 
     At the group level, this corresponds to the decomposition
     \begin{equation}\label{trivial_2}
     GL_+(n,\Rr) = SL(n,\Rr) \cdot \Rr_+^*\,,
     \end{equation}
     where $GL_+(n,\Rr)$ denotes the group of all real $n\times n$ matrices with positive determinant and $\Rr_+^*$ stands for the group of all scalar matrices of the
     form $\alpha I_n$ with $\alpha > 0$. 
         At the Lie algebra level this amounts to the addition of the algebra of \textit{all} scalar matrices to the algebra $\SL_n$, corresponding to the splitting
        $A = A_0 + \Tr (A) I_n$, where $A_0 := A - \Tr (A) I_n\in \SL_n$. That is, we make use again of the Levi decomposition
        \begin{equation} \label{Levi2}
        \GL_n = \SL_n \oplus \RR\,,
        \end{equation}
        where $\RR$ is the subalgebra of all real scalar matrices, including the zero matrix, isomorphic to $\Rr$; note that this is the tangent space, at the identity matrix, to the
        group $\Rr_+^*$ (as well as to the
        group  $\Rr^*$ of invertible scalar matrices). 
        Using (\ref{Cartan_alg_0}) we obtain the  decomposition
        \begin{equation}\label{Cartan_alg_bis}
        \GL_n = \SO_n \oplus \PP_0 \oplus \RR\,,
        \end{equation}
        which is precisely (\ref{Cartan_alg}) since $\PP_0 \oplus \RR= \PP_n$, the space of all $n\times n$ symmetric matrices.\\
        
      %
     
     The Lie algebra decomposition (\ref{Cartan_alg_bis}) is the infinitesimal counterpart of the group (polar) decomposition
     \begin{equation}\label{Cartan_group_tris}
     GL_+(n,\Rr) = SO(n,\Rr)\cdot \exp(\PP_0)\cdot \Rr_+^* = SO(n,\Rr) \cdot \exp (\PP_n),
     \end{equation}
    At the matrix level the decomposition can be written as $A= (\Det A)^{-\frac{1}{n}} \cdot  A \cdot (\Det A)^\frac{1}{n}=
     Q \cdot P$. Here $Q\in SO(n,\Rr)$ and $P=(\Det A)^\frac{1}{n} \cdot P_1$ with $P_1$ such that
   $ (\Det A)^{-\frac{1}{n}} \cdot  A = Q\cdot P_1$ (polar factorization); note that $P_1$ is symmetric positive definite, with $\Det P_1 = 1$ 
   (i.e., $P_1$ is the exponential of a symmetric matrix with zero trace), while $\Det P = \Det A.$\\

         As is well known, the symmetric and skew-symmetric splitting (\ref{HSS}) is the basis for different iterative methods for solving nonsingular linear systems $A x = b$,
	including both stationary methods \cite{Bai,Benzi2004} and non-stationary/preconditioned iterations \cite{Concus,Szyld2022,Widlund}. In Section \ref{sec:4}  we
        take a look at  analogous Lie algebra decompositions of $\GL_n$ and describe the corresponding matrix splittings.\\

\subsection{The Zappa-Sz\'ep decomposition} \label{sec:ZS} 
	
	We begin this subsection by recalling a purely algebraic decomposition of a group as product of two of its subgroups, known as the Zappa-Sz\'ep product,
	see \cite{Szep,Zappa}.\footnote{This decomposition is also know under a few other names, including Zappa-R\'edei-Sz\'ep product, general product, 
	knit product, exact factorization or bicrossed product.}  
	
	\begin{defn}\label{ZS}
	Let $G$ be a group with identity element $e$ and let $H$, $K$ be subgroups of $G$. If $G=HK$ and $H\cap K = \{e\}$, then $G$ is said to be an
	(internal)  {\rm{Zappa-Sz\'ep product}} of $H$ and $K$.  
	\end{defn}
	
	Equivalently, $G$ is the Zappa-Sz\'ep product of $H$ and $K$ if for every $g\in G$ there exist unique $h\in H$ and $k\in K$ such that $g=hk$.  We also observe that
	if $G=HK$ is a Zappa-Sz\'ep product, then also $G=KH$ is.    We stress that in this product neither $H$ nor $K$ need to be normal subgroups of $G$, hence this
	product is more general than the semidirect product of subgroups. Note that the (global) Cartan decomposition of $SL(n,\Rr)$ (or $GL(n,\Rr)$)
	 is not a Zappa-Sz\'ep product since the factor $\exp ({\PP}$) is 
	not a subgroup of $SL(n,\Rr)$ or $GL(n,\Rr)$.\\
	
	In the case of (matrix) Lie groups, the Zappa-Sz\'ep decomposition induces a splitting of the corresponding Lie algebra as a direct sum
	of subalgebras,  as shown by the following result. Although straightforward, we haven't been able to find this result in the literature and thus 
	we include a proof.\\

	\begin{prop} \label{ZS_splitting}
	Let the Lie group $G$ be the Zappa-Sz\'ep product of Lie subgroups $H\subset G$ and $K\subset G$, and let $\GG$, $\HH$ and $\GKK$ be the corresponding Lie algebras. Then
	$$ \GG = \HH \oplus \GKK\,.$$
	\end{prop}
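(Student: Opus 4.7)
The plan is to verify the two defining conditions of a direct sum decomposition of vector spaces: $\HH \cap \GKK = \{0\}$ and $\HH + \GKK = \GG$. Throughout I will use the fact that for a Lie subgroup, its Lie algebra is precisely the set of tangent vectors at the identity, equivalently the set of $X$ such that $\exp(tX)$ lies in the subgroup for all $t\in\Rr$.

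For triviality of the intersection, suppose $X\in\HH\cap\GKK$. Then the one-parameter subgroup $t\mapsto\exp(tX)$ lies in $H$ (because $X\in\HH$) and in $K$ (because $X\in\GKK$), hence in $H\cap K=\{I_n\}$. Differentiating at $t=0$ gives $X=0$.

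For the spanning condition, I would follow the differentiation template (\ref{decomp})--(\ref{split}) from the introduction. Given $A\in\GG$, choose a smooth curve $g:(-\epsilon,\epsilon)\to G$ with $g(0)=I_n$ and $g'(0)=A$. By the Zappa-Sz\'ep hypothesis, there exist unique $h(t)\in H$ and $k(t)\in K$ with $g(t)=h(t)k(t)$; uniqueness of the factorization of $I_n=I_n\cdot I_n$ forces $h(0)=k(0)=I_n$. Differentiating the identity $g(t)=h(t)k(t)$ at $t=0$ as in (\ref{split}) yields
\begin{equation*}
A=g'(0)=h'(0)+k'(0),\qquad h'(0)\in\HH,\ k'(0)\in\GKK,
\end{equation*}
which is the desired decomposition.

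The main obstacle, or at least the point demanding care, is justifying that the factor curves $h(t)$ and $k(t)$ are smooth in $t$ so that the differentiation above is legitimate. I would handle this by applying the inverse function theorem to the smooth multiplication map $\mu:H\times K\to G$, $(h,k)\mapsto hk$. A direct computation shows that $d\mu_{(I_n,I_n)}:\HH\oplus\GKK\to\GG$ is the linear map $(X,Y)\mapsto X+Y$, which is injective by the first step. Combined with the bijectivity of $\mu$ from the Zappa-Sz\'ep assumption, an invariance-of-domain argument (or, equivalently, the standard Lie-theoretic fact that a smooth bijection between Lie groups with injective differential at a point is a local diffeomorphism) forces $\dim(H\times K)=\dim G$, whence $d\mu_{(I_n,I_n)}$ is a linear isomorphism. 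The inverse function theorem then yields a smooth local inverse to $\mu$ near $(I_n,I_n)$, and composing this local inverse with $g(t)$ for $|t|$ small produces smooth curves $h(t)$ and $k(t)$ realizing the factorization. This simultaneously provides an independent verification that $\HH+\GKK=\GG$, since the isomorphism $d\mu_{(I_n,I_n)}$ is by construction the addition map on $\HH\oplus\GKK$.
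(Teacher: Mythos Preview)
Your proof is correct and, in fact, more careful than the paper's own argument. The paper's proof simply asserts that the multiplication map $\mu:H\times K\to G$ is ``evidently a diffeomorphism,'' reads off the dimension equality $\dim G=\dim H+\dim K$, and concludes $\GG=\HH\oplus\GKK$ from that. You invert the logic: you first establish $\HH\cap\GKK=\{0\}$ directly from $H\cap K=\{I_n\}$ via the exponential map, and then use this (as injectivity of $d\mu_{(I_n,I_n)}$) together with bijectivity of $\mu$ and a measure/invariance-of-domain argument to force the dimensions to match, which in turn justifies the local diffeomorphism claim and gives smooth factor curves. The payoff of your route is that it actually supplies the missing justification for the paper's ``evidently'': the Zappa--Sz\'ep hypothesis only gives a smooth bijection, and smoothness of the inverse is exactly the point requiring work. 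The paper's route is shorter but leans on that step as a black box; yours is self-contained.
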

	
	\proof
	First we note that $\HH$ and $\GKK$ are Lie subalgebras of $\GG$ and therefore $\HH + \GKK \subseteq \GG$.  To show that equality holds and that $\HH \cap \GKK =\{0\}$
	we use a dimensionality argument. The map $(h,k) \mapsto g = hk$ is evidently a diffeomorphism of $H\times K$ onto $G$ (as differentiable manifolds), implying that 
	$$\dim (G) = \dim(H\times K) = \dim(H) + \dim(K)\,.$$
	It follows that the corresponding Lie algebras must also satisfy
	$$\dim(\GG) = \dim(\HH) + \dim(\GKK) $$
	and therefore $\HH \cap \GKK =\{0\}$, showing that $ \GG = \HH \oplus \GKK\,.$
	\endproof

	\begin{rem}
	It is worth mentioning that while $\HH$ and $\GKK$ are Lie subalgebras of $\GG$, the direct sum in Proposition \ref{ZS_splitting} need not be
	a direct sum of Lie algebras, unless the condition $[\HH,\GKK]=0$ holds. In general, it is only a direct sum of vector subspaces. An example of a 
	direct sum decomposition of a Lie algebra is the Levi decomposition $\GL_n = \SL_n \oplus \RR$, since $[\SL_n, \RR] = 0$. In order to distinguish between
	a direct sum decomposition of a Lie algebra and 
	the decomposition of a Lie algebra as a direct sum of vector subspaces, which may or may not be Lie subalgebras,  the symbol $\oplus_L$ is sometimes 
	used instead of $\oplus$.
	\end{rem}

	An example of Zappa-Sz\'ep product is given by the QR factorization \cite{GVL4}. Any nonsingular matrix $A$ can be factorized as $A=QR$ with $Q$ orthogonal
	and $R$ upper triangular with positive diagonal entries, and the factors are unique.  Letting $G=GL(n, \Rr)$, $H=O(n,\Rr)$ and $K=UT_+(n,\Rr)$, 
	the group of upper triangular matrices with positive diagonal entries,
	we have that $G=HK$,
	a Zappa-Sz\'ep product since $H\cap K = \{I_n\}$.
	Moreover, in this case $G$ is a Lie group and $H$, $K$ are Lie subgroups.   The corresponding
	Lie algebras are, respectively, $\GL_n$, $\SO_n$ and $\UT_n$ (the algebra of all $n\times n$ upper triangular matrices). The foregoing Proposition \ref{ZS_splitting} implies that
	
	\begin{equation}\label{skew-tri}
	\GL_n = \SO_n \oplus \UT_n\,.
	\end{equation}

	Equation (\ref{skew-tri}) simply expresses the fact that \textit{any real square matrix $A$ can be written in one and only one way as the sum of a 
	skew-symmetric matrix and an upper triangular matrix}. 
	This \textit{skew-symmetric and upper triangular splitting} is the basis for various iterative methods
	and preconditioners for solving nonsymmetric  linear systems, such as the STS and MSTS iterations (see, e.g., \cite{Bai_et_al,Krukier2002,Wang}).  The splitting takes the form
	\begin{equation}\label{SUT-split}
	A = (L_0 - L_0^T)  + (D + U_0 + L_0^T) 
	\end{equation}  
	where $A = L_0 + D + U_0$ is the usual splitting of a matrix into its strictly lower triangular, diagonal, and strictly upper triangular parts. 
	Of course, one may also consider  instead the \textit{skew-symmetric and lower triangular splitting},
	\begin{equation}\label{SLT-split}
	A = (U_0 - U_0^T)  + (D + L_0 + U_0^T) \,,
	\end{equation}  
	corresponding to the direct sum decomposition
	\begin{equation}\label{skew-lowertri}
	\GL_n =   \SO_n \oplus \LT_n\,,
	\end{equation}
	which in turns arises from linearization of the Zappa-Sz\'ep decomposition 
	$$GL(n,\Rr) = LT_+ (n, \Rr) \cdot O(n, \Rr),$$
	 which is  sometimes called the LQ factorization ($A=LQ$ with $L$ lower triangular with positive diagonal entries and $Q$ orthogonal).

       \begin{rem}
       Similar to the case of the Cartan decomposition, we can also derive (\ref{skew-tri}) without using Proposition \ref{ZS_splitting}, as follows. We first deal with the
       case of a matrix with unit determinant. 
       Consider a smooth matrix path $t\mapsto A(t) = Q_+(t) \cdot R_1(t)$ in $SL(n,\Rr)$ with $t\in \Rr$ in an interval around zero, $Q_+(t)$ a path in $SO(n,\Rr)$ and
       $R_1(t)$ a path in $SUT_n(n,\Rr)$ (the subgroup of upper triangular matrices with unit determinant), with $Q_+(0) = R_1(0) = I_n$.  Differentiating for $t=0$
       we obtain
       $$A'(0) = Q_+'(0) R_1(0) + Q_+(0) R_1'(0) = Q_+'(0) + R_1'(0) \,.$$
       This shows that any matrix in the Lie algebra $\SL_n$ can be written, uniquely, as the sum of a skew-symmetric matrix, $Q_+'(0) \in \SO_n$, and an upper triangular
       matrix with zero trace, $R_1'(0)\in \SUT_0$, hence
       \begin{equation}\label{skew-tri_0}
       \SL_n = \SO_n \oplus \SUT_0\,.
       \end{equation}
       To obtain the general case we make use again of the Levi decomposition (\ref{Levi}) and observe that
       $$\GL_n = \SL_n \oplus \RR = \SO_n \oplus \SUT_0 \oplus \RR\,,$$
       where $\RR$ is the Lie ideal of $\GL_n$ consisting of all the scalar matrices.  Observing that
       $$ \SUT_0 \oplus \RR = \UT_n\,,$$
       the Lie algebra of all $n\times n$ upper triangular matrices, 
       we obtain the direct sum decomposition (\ref{skew-tri}).  This is the infinitesimal counterpart of the (global) QR factorization of a nonsingular matrix.
       \end{rem}
       
       \begin{rem}
       It is worth noting that the vector subspaces $\UT_n$ and $\LT_n$, besides being Lie algebras with respect to the commutator product, are also Jordan algebras
       with respect to the Jordan product. This is not surprising in view of the fact that every associative algebra is automatically a Lie algebra under the commutator
       product and a Jordan algebra under the Jordan product \cite{McCrimmon}, and both $\UT_n$ and $\LT_n$ are associative algebras under the standard matrix product. 
       We explicitly note, however,  that the direct sum decompositions (\ref{skew-tri}) and (\ref{skew-lowertri}) of $\GL_n$ are not orthogonal with respect to the Frobenius
       inner product.  
       \end{rem}
       
       
       A further, simple example of Zappa-Sz\'ep decomposition is the decomposition of the group $GL_+(n,\Rr)$ given by (\ref{trivial_1})-(\ref{trivial_2}).
      As already pointed out, linearizing this decomposition near the identity $I_n$ yields the Lie algebra splitting
      $$\GL_n = \SL_n \oplus \RR\,,$$
      which is precisely the Levi decomposition (\ref{Levi2}) of $\GL_n$, corresponding to the matrix splitting $A=(A-\Tr(A) I_n) + \Tr (A) I_n$.
       
	
	\subsection{The Iwasawa decomposition} \label{sec:Iwasawa}
	
	The QR factorization also occurs as a special case of the \textit{Iwasawa decomposition}; see  for example \cite{Helgason,Knapp}, also referred to as
	the `KAN' decomposition. 
	If $G$ is a connected semisimple Lie group, there exists a compact subgroup ${\cal K}\subset G$, an Abelian subgroup ${\cal A}\subset G$, and a 
	nilpotent subgroup ${\cal N}\subset G$ such that 
	
	\begin{equation} \label{Iwasawa}
	G = \cal{K}\cdot \cal{A} \cdot \cal{N}\,.
	\end{equation}
	
	Consider the case where $G =SL(n,\Rr)$. Let  ${\cal K}=SO(n,\Rr)$, let 
	$\cal{A}$ be the subgroup of all diagonal matrices with positive diagonal entries and unit determinant, and ${\cal N}=UT_1(n,\Rr)$ (the subgroup of all 
	$n\times n$ unit triangular matrices). These are all connected Lie subgroups of $SL(n,\Rr)$ with $\cal{K}$ compact, $\cal{A}$ Abelian,
	and $\cal{N}$ nilpotent.  We can take a smooth path $t\mapsto A(t)$ in $SL(n,\Rr)$
	passing through the origin of the form 
	\begin{equation}\label{QDR}
	A(t) = Q_+(t)\cdot D_+(t)\cdot U(t)\,,
	\end{equation}
		with $Q_+(t)\in \cal{K}$, $D_+(t)\in \cal{A}$, $U(t) \in \cal N$ and 
		$Q_+(0) = D_+(0) = U(0) = I_n$. This is just the QR factorization written in QDR form, where the `D' factor is diagonal and the `R' factor
		is  unit upper triangular. 
         Differentiating (\ref{QDR}) at $t=0$ we obtain the splitting
	$$A'(0) = Q_+'(0) + D_+'(0) + U'(0)$$
	with $Q_+'(0) \in \SO_n$, $D_+'(0)$ a diagonal matrix, and $U'(0)$ strictly upper triangular. 
	Hence, the Iwasawa decomposition of the group $SL(n,\Rr)$ is just the QDR factorization, and its linearization near the identity yields the 
	Iwasawa decomposition of the semisimple Lie algebra $\SL_n$:
	\begin{equation}\label{skew-tri_Iwasawa}
	\SL_n = \SO_n \oplus \DD_0 \oplus \UT_0\,,
	\end{equation}
	where $\DD_0$ denotes the subalgebra of diagonal matrices with zero trace and $\UT_0$ that of strictly upper triangular matrices.
         Noting that $\DD_0 \oplus \UT_0 = \SUT_0$, the Lie algebra of all upper triangular matrices with zero trace, we recover the splitting (\ref{skew-tri_0}).\\
         
         For the general case ($G=GL(n,\Rr)$, with Lie algebra $\GL_n$) we follow the usual script; using the Levi decomposition (\ref{Levi})  we obtain the direct sum decomposition
         \begin{equation}\label{skew-tri_Iwasawa_gen}
         \GL_n = \SL_n \oplus \RR = \SO_n \oplus \DD_0 \oplus \UT_0 \oplus \RR = \SO_n \oplus \UT_n\,,
         \end{equation}
         i.e., the splitting of an arbitrary square matrix as the sum of a skew-symmetric matrix and an upper triangular matrix, both uniquely determined: if $A = L_0+D+U_0$,
         then $A = (L_0-L_0^T) + (D + U_0 + L_0^T)$, exactly as in subsection \ref{sec:ZS}.  This splitting is the basis for the method first studied in \cite{Bai_et_al}.
         We will come back to this method in Section \ref{subsec:STS}.

	\subsection{The LU decomposition and the Gauss-Seidel and Jacobi splittings}\label{sec:LDU}
	Perhaps the best known factorization of a square matrix is the triangular factorization, or LU decomposition. 
	It is a basic fact that if all the leading principal minors of $A\in GL(n,\Rr)$ are nonzero, there exist a unit lower triangular matrix $L$
	and an upper triangular matrix $U$ such that $A=LU$, and this decomposition is unique. 
	This is sometimes called the \textit{Doolittle factorization} of $A$; in alternative, we can write $A=LU$ with $L$ lower
	triangular and $U$ unit upper triangular (this is known as the \textit{Crout factorization}).  \\

         It is sometimes convenient to express the
	factorization in LDU form:
	\begin{equation}\label{LDU}
	A  = L\,D\,U\,,
	\end{equation}
	where $L$ and $U$ are unit triangular 
	and $D$ is the diagonal matrix of pivots. Then $A=L\cdot (DU)$ is the Doolittle factorization, and $A = (LD)\cdot U$ is Crout.   \\
	
	It is also well known that if $A\in GL(n,\Rr)$ does not have the LU factorization, there exists a permutation matrix $P$ such that $PA$ has
	the LU (or LDU) factorization or, equivalently, 
	\begin{equation}\label{PLDU}
	A = P^T\,L\,U \,.
	\end{equation}
	However, this factorization is no longer unique in general. It is also well known that when implementing the LU factorization on a computer
	a permutation matrix $P$, representing row interchanges (partial pivoting), is typically incorporated as a means to promote numerical stability 
	even though the matrix $A$ may have the LU factorization. In this paper, however, we
	 leave finite precision and numerical stability issues aside.\\
	
	While the LU (or LDU) factorization is a staple of numerical linear algebra, it is much less used in the context of Lie theory.  In its place, the
	\textit{Bruhat decomposition} plays an important role.  The Bruhat decomposition of a nonsingular matrix has the form
	
	\begin{equation}\label{Bruhat}
	A = U_1\,  P\,  U_2\,,
	\end{equation}
	with $U_1$, $U_2$ invertible upper triangular matrices and $P$ a permutation matrix.\footnote{Some authors define the Bruhat decomposition using lower
	triangular instead of upper triangular factors. In practice, this makes no difference.} At the group level, the Bruhat decomposition is
	\begin{equation}\label{Global_Bruhat}
	GL(n,\Rr) = UT(n,\Rr)\cdot  S_n\cdot  UT(n,\Rr)\,,
	\end{equation}
	where $UT(n,\Rr)$ is the group of all invertible upper triangular matrices and $S_n$ denotes the group of all $n\times n$ permutation matrices
	(isomorphic to the symmetric group on $n$ objects).	A variant is the \textit{modified Bruhat decomposition}, defined in \cite{Tyrt}:
	$$A = L\, \Pi \, U\,,$$
	with $L$ lower triangular, $\Pi$ a permutation matrix, and $U$ upper triangular.\footnote{We point out that the 
	modified Bruhat decomposition is sometines referred to as the \textit{Gelfand-Naimark decomposition}, see \cite{Tam}; however, no reference to Gelfand
	or Naimark is given in \cite{Tam}.}\\
	
	Unfortunately, none of these triangular factorizations is a Zappa-Sz\'ep decomposition of $GL(n,\Rr)$. Depending on the decomposition, either existence or uniqueness
	of the decomposition is not satisfied. Specifically, existence may fail for the LU factorization, and uniqueness for its permuted versions. 
	The set of invertible matrices for which the LDU factorization exists (and is unique), however, is dense in $GL(n,\Rr)$ and moreover,
	the LDU factorization exists and is unique for all matrices $A\in GL(n,\Rr)$ that are sufficiently close to the identity $I_n$; in other words,
	there exists $\varepsilon > 0$ such that  if $A\in B(I_n, \varepsilon)$, the open ball of radius $\varepsilon$ centered at $I_n$, then $A=LU$ (or $A=LDU$)
	exists and is unique.\footnote{Here the topology is the one defined by the operator norm induced by the vector $2$-norm, but other matrix norms could be used as well, since 
	all norms on $\Rr^{n\times n}$ induce the same topology.}  
	Indeed, the determinant is a continuous function, hence a matrix sufficiently close to $I_n$ will have all its leading principal minors different
	from zero, guaranteeing the existence and uniqueness of the LU (or LDU) factorization.   Therefore, we can linearize the LDU factorization near
	the identity matrix to determine its infinitesimal counterpart.\\

	To this end, consider for sufficiently small $|t|$ a smooth path of 
	invertible matrices $t\mapsto A(t)\in B(I_n, \varepsilon)$ of the
	form 
	\begin{equation}\label{path_lu} 
	A(t) = L(t)U(t)\qquad {\text{with}} \quad L(0) = I_n, \,\,\, U(0) = I_n \,,
	\end{equation}
	where $L(t)\in LT_1(n,\Rr)$ and  $U(t)\in UT(n,\Rr)$. Differentiating (\ref{path_ldu}) at $t=0$ we obtain
	\begin{equation}\label{linearized_lu}
	A'(0) = L'(0) + U'(0)
	\end{equation}
	with $L'(0)$ strictly lower triangular and $U'(0)$ upper triangular.  At the Lie algebra level we have the direct sum decomposition
	\begin{equation}\label{Lie_Doolittle}
	\GL_n = \LT_0 \oplus \UT_n\,.
	\end{equation}

	 Letting $A'(0) = A$, $L'(0) = L_0$ and $U'(0) = U = D + U_0$
	with $D$ diagonal and $U_0$ strictly upper triangular, we see that the lower and upper triangular splitting
	\begin{equation}\label{tri_split}
	A = L_0 + U
	\end{equation}
	is the linearization of the Doolittle form of the LU factorization.  In the case of the Crout form we have again the smooth factorization
	(\ref{path_lu}) but now with $L(t)\in LT(n,\Rr)$ (lower triangular) and $U(t)\in UT_1(n,\Rr)$ (unit upper triangular). In this case, linearization
	near the identity yields the direct sum decomposition
	\begin{equation}\label{Lie_Crout}
	\GL_n = \LT_n \oplus \UT_0\,.
	\end{equation}
  In matrix terms this corresponds to the splitting
	\begin{equation}\label{tri_split_GS}
	A = L + U_0\,,
	\end{equation}
	where $L = L_0 + D$.\\
	
	We observe that both the Lie algebra splittings (\ref{Lie_Doolittle}) and (\ref{Lie_Crout}) are orthogonal direct sums with respect to the Frobenius
	inner product, since
	$$ \LT_0 = (\UT_n)^\perp \qquad {\text{and}} \qquad \UT_0 = (\LT_n)^\perp$$
	as vector subspaces of $\Rr^{n\times n}$.\\

	In numerical linear algebra, when $D$ is nonsingular the splitting (\ref{tri_split_GS}) corresponds to the Gauss-Seidel method, whereas (\ref{tri_split}) corresponds to
	the Gauss-Seidel method applied in reverse order.  It is an interesting coincidence that linearization of the LU factorization underlying Gaussian elimination leads to the
	Gauss-Seidel splittings. If $t\mapsto A(t) = A(t)^T$, then for $|t|$ small enough $A(t)$ will be positive definite, the LDU factorization becomes the root-free Cholesky
	factorization $A=LDL^T$, and linearizing at $t=0$ yields
	the symmetric matrix splitting $A'(0)=L_0 + D_0 + L_0^T$.\\

		We can also write 
	\begin{equation}\label{path_ldu} 
	A(t) = L(t) D(t) U(t) 
	\end{equation}
	with $D(t)$ diagonal such that $D(0)=I_n$, $L(t)$ unit lower triangular and $U(t)$ unit upper triangular, and obtain (\ref{tri_split}) or (\ref{tri_split_GS}) by differentiating
	(\ref{path_ldu}) at $t=0$.  The Lie algebra splitting can be then written as the orthogonal direct sum
	
	\begin{equation}\label{ldu_split}
	 \GL_n = \LT_0  \oplus \DD \oplus \UT_0\,.
	 \end{equation}\\	 
	 Writing this as $\GL_n = (\LT_0  \oplus \DD) \oplus \UT_0$ or as $\GL_n = \LT_0  \oplus (\DD \oplus \UT_0)$ leads to the two types of Gauss-Seidel splittings.\\
	
	We note incidentally that the decomposition of the matrix group $UT(n,\Rr)$ as the product of the subgroup
	of (invertible) diagonal matrices and that of the unit upper triangular ones and the decomposition of the matrix group $LT(n,\Rr)$ as
	the product of the subgroup of the unit lower triangular matrices and that of (invertible) diagonal ones are both examples of Zappa-Sz\'ep products. \\
	
	The splitting associated to the Jacobi stationary iteration can be written as
	\begin{equation}\label{jac}
	A = D_0 + (L_0 + U_0) = D_0 - (A-D_0)\,.
	\end{equation}
	
	If we introduce the $n(n-1)$-dimensional vector subspace
	\begin{equation}\label{W}
	 \WW_n = \{ C=(c_{ij})\in \Rr^{n\times n} \, | \,  c_{ii} = 0 \,\,\, \forall i=1,\dots ,n \}
	 \end{equation}
	consisting of all the real $n\times n$ matrices with zero diagonal entries we have that
	$$\WW_n = \LT_0  \oplus \UT_0$$
	and therefore the Jacobi splitting corresponds to the orthogonal direct sum
	\begin{equation}\label{jacobi_split}
	 \GL_n =  \DD \oplus \WW_n\,.
	 \end{equation}
	
         Note that $\WW_n$ is not a Lie algebra with respect to
	the commutator product.  It is the tangent space at the identity of the matrix manifold 
	$$W_n = LT_1(n,\Rr)\cdot UT_1(n,\Rr)\,,$$
	product of the Lie subgroups of $SL(n,\Rr)$ consisting of all unit lower and unit upper triangular matrices. 
	Note hat $W_n$ is not a group, consistent with the fact that $\WW_n$ is not an algebra. \\

	\subsection{Spectral decompositions}  	
	In linear algebra there are a number of canonical forms of matrices that can be cast as multiplicative matrix decompositions, such as the Jordan form $A=X(D + N)X^{-1}$
	(with $D$ diagonal and $N$ a particular nilpotent matrix such that $DN=ND$) and the Schur form $A = URU^T$ with $U$ unitary and $R$ upper triangular. 
	These reduce to $A=XDX^{-1}$ and $A = UDU^T$  in the diagonalizable and normal case, respectively.
	We refer to these as `spectral decompositions.' \\
	
	We will not consider infinitesimal counterparts of these decompositions in this paper. The main reason is that, generically, a matrix $A\in GL(n,\Rr)$ has complex
	eigenvalues, and thus in order to interpret these spectral decompositions as factorizations of a group into subgroups (even if only locally) would imply working
	in the complex group $GL(n,\Cc)$ and its Lie algebra, which we do not consider in this paper.  Restricting the analysis to classes of matrices with real eigenvalues
	does not solve the problem, since these matrix classes do not, in general, have a Lie group structure, and moreover linearizing near the
	identity cannot result in a global direct sum decomposition of the tangent space of $GL(n,\Rr)$ (or $SL(n,\Rr))$ into subspaces, since any neighborhood of the 
	identity matrix $I_n$ in $GL(n,\Rr)$ contains matrices with non-real eigenvalues. \\

	A possibility would be to replace the complex Schur form of $A$ with its real Schur form. The real Schur decomposition is $A=QRQ^T$  with $Q\in O(n,\Rr)$ and $R\in \Rr^{n\times n}$ 
	block upper triangular with $2\times 2$ diagonal blocks corresponding to complex conjugate eigenvalues and $1\times 1$ blocks corresponding to the real eigenvalues (when present).
	Doing this requires extending our treatment from point to block factorizations and block splittings; this can also be done, in principle, for the LU and QR factorization.  We leave these
	generalizations for future work. 
		
	\section{Lie-Jordan splittings}  \label{sec:4} 
	
	As already mentioned, the splitting of a matrix into its symmetric and skew-symmetric parts is the basis for various iterative algorithms
	and preconditioning techniques. A prominent example of this is  the well-known \textit{Hermitian and skew-Hermitian splitting} (HSS) method:
	\begin{equation}\label{HSS_iteration}
		\begin{cases}
			(\alpha I_n + \mathcal{H}(A) ) x^{(k+\frac{1}{2})}=(\alpha I_n -\mathcal{S}(A) ) x^{(k)}+b\\
			(\alpha I_n +\mathcal{S}(A) ) x^{(k+1)} = (\alpha I_n - \mathcal{H}(A) ) x^{(k+\frac{1}{2})}+b
		\end{cases}\quad 	(k=0,1,2,\ldots),
	\end{equation}
	first proposed  in \cite{Bai}, where it is shown that if $\mathcal{H}(A)$ is positive definite, the HSS method converges to the unique solution of 
	the linear system $Ax=b$, for any initial guess and  $\alpha >0$. In \cite{Benzi2004}, the HSS method  was extended to generalized saddle point problems in which the Hermitian part of the coefficient     matrix of the  system  is possibly singular, and the use of HSS as a preconditioner was also proposed and investigated. Since then, hundreds of papers have ben written on this method and its many variants;   see, e.g., the treatment in \cite{BaiBook} and references therein.\\
	
	As we saw in subsection \ref{sec:Cartan}, the splitting $A={\mathcal S}(A) + {\mathcal H}(A)$ corresponding to the direct sum decomposition (\ref{Cartan_alg})
	is a splitting of the total matrix space $\Rr^{n\times n}$ as the orthogonal sum of a Lie algebra and a Jordan algebra. This is not the only example of such a splitting; as we saw in
	subsection \ref{sec:LDU}, the Gauss-Seidel splittings (\ref{Lie_Doolittle}) and (\ref{Lie_Crout}) are also of this type. We call such a splitting a \textit{Lie-Jordan splitting}, 
	see Definition \ref{def:LJS} below. The skew-symmetric and triangular 
	splitting and the Jacobi splitting, on the other hand, are not Lie-Jordan splittings. \\	
	\begin{defn}\label{def:LJS}
	We say that a matrix $A\in \GL_n=\Rr^{n\times n}$ admits a (non-trivial) {\rm Lie-Jordan splitting} if it can be written as $A = B+C$ (with $B\ne 0$, $C\ne 0$) where $B\in \GG$, $C\in \Jj$, 
	with $\GG$ a Lie subalgebra and $\Jj$ a Jordan subalgebra such that $\Rr^{n\times n} = \GG \oplus \Jj$ (direct sum of vector subspaces) and  $\GG \perp \Jj$ with respect to the Frobenius
	inner product.	\\
	\end{defn}
	
	 In the next
	two subsections we will describe two additional examples of Lie-Jordan splittings.  Although not under this name, these Lie algebra splittings (and the 
	corresponding group-level decompositions) are well known in Lie theory;  a recent treatment can be found in Section 3 of \cite{Edelman2}, see also \cite[Ch.~2]{Gilmore}.
  In Section \ref{sec:LJ}  we show how these splittings lead to generalizations of the HSS  iterative method for solving linear systems.\\

	A key feature of the symmetric and skew-symmetric 
	direct sum decomposition of $\Rr^{n\times n}$ is that the Lie algebra component $\SO_n$
	(the skew-symmetric matrices)
	is the tangent space at the identity of the orthogonal group $O(n,\Rr)$, which is the group of linear transformations on $\Rr^n$ that preserve the 
	standard (Euclidean) inner product.  The Jordan
	algebra component (the space of symmetric matrices) is the orthogonal complement of the tangent space to $O(n,\Rr)$ with respect to the Frobenius 
	inner product $\langle A,B\rangle_F= \Tr (A^TB)$. In other words, it is the normal space to $O(n,\Rr)$ at the identity.\\
	
	A natural question, then, is to ask what happens if the standard inner product in $\Rr^n$, $\langle x,y\rangle = x^Ty$, is replaced with a different bilinear form, associated with a matrix
	$J\in \Rr^{n\times n}$: 
	$$b(x,y) = x^TJy$$ 
	(which reduces to the standard inner product when $J=I_n$).   Notice that, in principle, $J$ need not be symmetric positive definite; that is,
	$J$ may not define an inner product.  Recalling now that the classical groups $O(p,q,\Rr)$ and $Sp(n,\Rr)$, like $O(n,\Rr)$,  can all be defined as the
	sets of matrices $Q$ such that $b(Qx,Qy)=b(x,y)$ for all $x, y\in \Rr^n$ for a suitable bilinear form $b$,
	it is natural to begin by examining the cases of the pseudo-orthogonal group $O(p,q, \Rr)$ and the symplectic
	group $Sp(n,\Rr)$.  We do this in the next two subsections.  \\
	
	For future reference, we recall that the {\textit{$J$-adjoint}} of a matrix $A\in \Rr^{n\times n}$ is defined as $A^\star = J^{-1} A^T J$, and that a matrix is 
	{\textit{$J$-symmetric}} if $A=A^\star$ and
	{\textit{$J$-skew-symmetric}} if $A = - A^\star$.

	\subsection{The pseudo-symmetric and pseudo-skew-symmetric splitting} \label{pseudo-HSS}
	
	Let $p$ and $q$ be positive integers and let  $n=p+q$.  Consider the bilinear form associated with the matrix
	\begin{equation}\label{Ipq}
	 J = I_{p,q} = \left[ {\begin{array}{*{20}{c}}
				I_p&0\\
				0& -I_q
		\end{array}}\right ] \,.
	\end{equation}
	The group leaving $I_{p,q}$ invariant is the \textit{pseudo-orthogonal} (or \textit{indefinite}) group $O(p,q,\Rr)$. It can also be characterized as the group of all matrices $A$
	satisfying  the equivalent conditions $I_{p,q}A^TI_{p,q}= A^{-1}$, 
	$A^T I_{p,q} A = I_{p,q}$, or $A= I_{p,q} (A^T)^{-1} I_{p,q}$.\\
	
	 The Lie algebra (tangent space at the identity) of the group $O(p,q,\Rr)$ 
	consists of the \textit{$(p,q)$-pseudo-skew-symmetric matrices},
	which have the following structure (see, e.g., \cite{AMM}):
	$$ A = \left[ {\begin{array}{*{20}{c}}
				F &G^T \\
				G & H
		\end{array}}\right ] \,, \quad F= - F^T\in \Rr^{p\times p},\,\,\, H = -H^T \in \Rr^{q\times q}, \,\,\, G\in \Rr^{q\times p}\,.
		$$
	Note that this is precisely the set of all matrices $A\in \Rr^{n\times n}$ which satisfy the condition
	$$A^T I_{p,q} + I_{p,q}A = 0$$
	(a linearized version of the condition $A^T I_{p,q} A = I_{p,q}$).
	This set of matrices is clearly a vector subspace of $\Rr^{n\times n}$, is closed under the commutator product,
	and it is  a Lie subalgebra of dimension $\frac{p(p-1)}{2} + \frac{q(q-1)}{2} + pq = \frac{n(n-1)}{2}$ of $\GL_n$. We shall denote this Lie algebra
	by $\SO_{p,q}$.  Next, consider the set of $n\times n$ matrices of the form
			$$ A = \left[ {\begin{array}{*{20}{c}}
				F &G^T \\
				-G  & H
		\end{array}}\right ] \,, \quad F= F^T\in \Rr^{p\times p},\,\,\, H = H^T \in \Rr^{q\times q}, \,\,\, G\in \Rr^{q\times p}\,.
		$$
		This is the set of \textit{$(p,q)$-pseudo-symmetric} matrices.
		It is readily verified that the matrices in this set are precisely those that satisfy the condition 
		$$A^T I_{p,q} - I_{p,q}A = 0\,.$$
		This set of matrices is again a vector subspace of $\Rr^{n\times n}$, is closed under the Jordan product,
		and it is a Jordan algebra, which we denote by $\PP_{p,q}$. Its dimension is $\frac{p(p+1)}{2} + \frac{q(q+1)}{2} + pq = \frac{n(n+1)}{2}$,
		hence as a subspace of $\GL_n=\Rr^{n\times n}$ it is complementary to $\SO_{p,q}$:
		\begin{equation}\label{PS}
		\GL_n=  \Rr ^{n\times n} = \SO_{p,q} \oplus \PP_{p,q}\,.
		\end{equation}
		
		The decomposition (\ref{PS}) states that any matrix ${\cal A}\in \Rr^{(p+q)\times (p+q)}$ can be written in one and only one way as the
		sum of  a matrix $H\in \PP_{p,q}$ and a matrix $ S \in \SO_{p,q}$. The splitting is given by 
				\begin{equation}\label{Pseudo_HSS}	
                {\mathcal A} = \left[ {\begin{array}{*{20}{c}}
				A & B \\
				C  & D
		\end{array}}\right ] = \left[ {\begin{array}{*{20}{c}}
				\frac{1}{2}(A+A^T)  & \frac{1}{2}(B-C^T)\vspace{0.1in}  \\
				-\frac{1}{2}(B^T-C)  & \frac{1}{2}(D+D^T)
		\end{array}}\right ]  +	
		\left[ {\begin{array}{*{20}{c}}
				\frac{1}{2}(A-A^T)  & \frac{1}{2}(B+C^T) \vspace{0.1in} \\
				\frac{1}{2}(B^T+C)  & \frac{1}{2}(D-D^T)
		\end{array}}\right ]  = H+S\,.
	       \end{equation}
		
		We call the splitting (\ref{Pseudo_HSS}) the \textit{pseudo-symmetric and pseudo-skew-symmetric splitting}.  
		It is easily checked that  $H\in \PP_{p,q}$, $S\in \SO_{p,q}$, and  $\Tr(H^TS) = 0$. Indeed, (\ref{PS}) is an orthogonal
		direct sum decomposition, and (\ref{Pseudo_HSS}) is an example of a Lie-Jordan splitting. It depends, of course, on the choice of $p$ and $q$
		(where $p+q=n$). In the case $p=n$ and $q=0$,  we have $O(p,q,\Rr)$ = $O(n,\Rr)$, ${\cal A} = A \in \Rr^{n\times n}$, and
		(\ref{PS})-(\ref{Pseudo_HSS}) reduce to the usual symmetric and skew-symmetric splitting (\ref{Cartan_alg})-(\ref{HSS}).\\
		
		In Section \ref{sec:Cartan} we saw how the symmetric and skew-symmetric splitting can be interpreted as the infinitesimal version of the polar factorization. 
		It is thus natural to ask
		if the pseudo-symmetric and pseudo-skew-symmetric splitting corresponds to a kind of polar factorization.  In terms of Lie algebras and Lie groups, noting that 	
		 (\ref{PS}) is a 
		 decomposition of $\GL_n$ into eigenspaces corresponding to the $\pm 1$ eigenvalues of the
		   involution $\theta (X) = - I_{p,q}A^TI_{p,q}$, it is natural to ask if there exists an associated
		 global 
		 decomposition of $GL(n,\Rr)$, of which (\ref{PS}) is the infinitesimal counterpart. This \textit{indefinite polar factorization}, if it exists, must take the form
		 \begin{equation}\label{polar_indef}
		 A=QP \,\, \mbox{ where } \,\, A\in GL(n,\Rr),\,\, Q\in O(p,q,\Rr),\,\, P \in \exp (\PP_{p,q})\,.
		 \end{equation}
		 The idempotent diffeomorphism $\Theta$ leaving $O(p,q,\Rr)$ fixed is given by $\Theta(X) = I_{p,q} (X^T)^{-1} I_{p,q}$.\\
		 
		 As it turns out, the decomposition (\ref{polar_indef}) exists under certain conditions on $A$.  These conditions have been established in \cite{Higham05},
		 where the \textit{generalized polar decomposition} is investigated; see also \cite{Tisseur06}. In particular, specialization of Theorem 4.1 in \cite{Higham05} to the pseudo-orthogonal group
		 implies that $A\in GL(n,\Rr)$ has the factorization (\ref{polar_indef}) if and only if the pseudo-symmetric matrix 
		 $A^\star A =I_{p,q}A^TI_{p,q}A$ has no eigenvalues on the negative real 
		 axis.\footnote{Note that if either $p=0$ or $q=0$, then $A^\star A =I_{p,q}A^TI_{p,q}A=A^TA$, which is positive definite and therefore satisfies the condition for the existence of the
		 (standard) polar factorization, for any $A$.}
		 Hence, for a ``generic" $A\in GL(n,\Rr)$ the generalized polar decomposition (\ref{polar_indef}) exists,
		 in the sense that the set of matrices for which the decomposition exists is dense in $GL(n,\Rr)$. 
		 Moreover, it is easily seen that as $A$ approaches the identity matrix
		 in $GL(n,\Rr)$,
		 $I_{p,q}A^TI_{p,q}A$ also tends to the identity and therefore has no eigenvalues on the negative real axis. In other words, there exists an open neighborhood of the
		 identity matrix in $GL(n,\Rr)$ such that every matrix in this neighborhood has the generalized polar decomposition (\ref{polar_indef}).  We can therefore linearize this
		 factorization near the identity to obtain the splitting (\ref{Pseudo_HSS}). The situation is similar to the one we encountered when discussing the LDU factorization, which 
		 always exists in a sufficiently small neighborhood of the identity.
		 
		\begin{rem}
		It is natural to introduce a ``pseudo-HSS iteration", i.e., an  iterative method of the form (\ref{HSS_iteration}) but based on the
		 pseudo-symmetric and pseudo-skew-symmetric splitting (\ref{Pseudo_HSS}), and to seek convergence conditions for it. We do this
		 in Section \ref{sec:LJ}.
		 \end{rem}

	\subsection{The Hamiltonian and skew-Hamiltonian splitting}  \label{HSH}
	
	Let $n=2m$ (even) and consider the anti-symmetric bilinear form associated with the matrix
	\begin{equation}\label{sym}
	J =  \Sigma_n= \left[ {\begin{array}{*{20}{c}}
				0 & I_m\\
				-I_m& 0
		\end{array}}\right ] \,.
	\end{equation}
	The group leaving $\Sigma_n$ invariant is the {\textit{symplectic group}} $Sp(n,\Rr)$. These are the matrices $A$ that satisfy the equivalent conditions 
	$\Sigma_n A^T \Sigma_n= -A^{-1}$,  $A^T \Sigma_n A = \Sigma_n$, or $A=- \Sigma_n(A^T)^{-1} \Sigma_n $.\\

        The Lie algebra of the group $Sp(n,\Rr)$ 
	consists of the \textit{Hamiltonian matrices},
	which have the following structure:
	$$ A = \left[ {\begin{array}{*{20}{c}}
				F &G \\
				H & -F^T
		\end{array}}\right ] \,, \quad F\in \Rr^{m\times m},\,\,\, G= G^T\in \Rr^{m\times m},\,\,\, H = H^T \in \Rr^{m\times m}\,,
		$$
	see \cite{AMM}. Note that this is precisely the set of all matrices $A\in \Rr^{n\times n}$ which satisfy the condition
	$$A^T \Sigma_n + \Sigma_n A = 0$$
	(a linearized version of the condition $A^T \Sigma_n A = \Sigma_n$).
	This set of matrices is clearly a vector subspace of $\Rr^{n\times n}$, is closed under the commutator product,
	and it is  a Lie subalgebra of dimension $m(m+1)+ m^2 = 2m^2 + m$ of $\GL_n$. We shall denote this Lie algebra
	by $\SP_n$.  Next, consider the set of $n\times n$ matrices of the form
	$$ A = \left[ {\begin{array}{*{20}{c}}
				F &G \\
				H & F^T
		\end{array}}\right ] \,, \quad F\in \Rr^{m\times m},\,\,\, G= -G^T\in \Rr^{m\times m},\,\,\, H = -H^T \in \Rr^{m\times m}\,.
		$$
		A matrix of this type is said to be \textit{skew-Hamiltonian}.
                It is readily verified that the skew-Hamiltonian matrices are precisely those that satisfy the condition 
		$$A^T \Sigma_n - \Sigma_n A = 0\,.$$
		This set is again a vector subspace of $\Rr^{n\times n}$, and it forms a Jordan algebra with respect to the Jordan product.
		We denote it by $\JJ_n$. Its dimension is $m(m-1)+ m^2 = 2m^2 - m$,
		hence as a subspace of $\GL_n=\Rr^{n\times n}$ it is complementary to $\SP_n$:
		\begin{equation}\label{PH}
		\GL_n=  \Rr ^{n\times n} = \SP_n \oplus \JJ_n\,.
		\end{equation}

The decomposition (\ref{PH}) states that any matrix ${\cal A}\in \Rr^{n\times n}$ with $n=2m$ can be written in one and only one way as the
		sum of  a matrix  $S \in \SP_n$ and a matrix $H\in \JJ_n$. The splitting is given by 
		\begin{equation}\label{Ham_HSS}	
		{\mathcal A} = \left[ {\begin{array}{*{20}{c}}
				A & B \\
				C  & D
		\end{array}}\right ] = \left[ {\begin{array}{*{20}{c}}		
		\frac{1}{2}(A-D^T)  & \frac{1}{2}(B+B^T) \vspace{0.1in} \\ 			
				\frac{1}{2}(C+C^T)  & -\frac{1}{2}(A^T-D)
		\end{array}}\right ]   +		
		 \left[ {\begin{array}{*{20}{c}}
		\frac{1}{2}(A+D^T)  & \frac{1}{2}(B-B^T)\vspace{0.1in}  \\ 				
				\frac{1}{2}(C-C^T)  & \frac{1}{2}(A^T+D)
		\end{array}}\right ]   =  S+ H \,.
	       \end{equation}

We call the splitting (\ref{Ham_HSS}) the \textit{Hamiltonian and skew-Hamiltonian splitting}.  It is only defined for $n$ even.
		It is easily checked that $ S\in \SP_n$ and $H\in \JJ_n$,  and that  $\Tr(S^TH) = 0$. Indeed, (\ref{PH}) is an orthogonal
		direct sum decomposition, and (\ref{Ham_HSS}) is another example of a Lie-Jordan splitting. \\
		
		We note that (\ref{PH}) is a 
		decomposition of $\GL_n$ into eigenspaces corresponding to the eigenvalues $\pm 1$ of the involution $\theta (X) =  \Sigma_n X^T\Sigma_n$,
                 with the idempotent diffeomorphism $\Theta$ leaving $Sp(n,\Rr)$ fixed being given by $\Theta(X) = -\Sigma_n (X^T)^{-1} \Sigma_n$.
		As in the previous subsection, specializing Theorem 4.1 in \cite{Higham05} to the symplectic group implies that a matrix $A\in GL(n,\Rr)$ ($n$ even) 
		has a ``symplectic" polar factorization of the form
		\begin{equation}\label{polar_symp}
		A = WZ  \,\, \mbox{ where } \,\, W\in Sp(n,\Rr),\,\, Z \in \exp (\JJ_n)
		 \end{equation}
		 if and only if the skew-Hamiltonian matrix $-\Sigma_n A^T \Sigma_n A$ has no eigenvalues on the negative real axis.  Again, this is a condition
		 that holds generically. It is also easy to see that as $A$ approaches the identity matrix in $GL(n,\Rr)$, so does $-\Sigma_n A^T \Sigma_n A$, hence
		 there is a neighborhood of the identity in $GL(n,\Rr)$ such that every matrix in this neighborhood has the symplectic polar factorization (\ref{polar_symp}).
		 Therefore, the splitting (\ref{Ham_HSS}) is  obtained as the linearization near the identity of the factorization
		 (\ref{polar_symp}).
		 
		 \begin{rem}
		 As in the case of the pseudo-symmetric and pseudo-skew-symmetric splitting, 
		 we can introduce an HSS-like iteration, i.e., an  iterative method of the form (\ref{HSS_iteration}) but based on the
		 Hamiltonian and skew-Hamiltonian splitting (\ref{Ham_HSS}), and to seek convergence conditions for it. We do this
		 in Section \ref{sec:LJ}.
                  \end{rem}

		\subsection{Quadratic algebra decomposition} \label{sec:quadratic}

		 The symmetric and skew-symmetric, pseudo-symmetric and pseudo-skew-symmetric, and the Hamiltonian and skew-Hamiltonian splittings are all
		 instances of what we called Lie-Jordan splittings; the first one corresponds to the Cartan decomposition of $\GL_n$, the other two to
		 decompositions that are not Cartan but only 
		 ``Cartan-like," all well known in Lie theory. Moreover,  their global variants are all types of polar factorizations which are known in
		 numerical linear algebra.
		 In this subsection we discuss one possible unified treatment of these splittings, showing that they are all special cases of a general construction. To this end we recall
	         the definition of a $J$-quadratic Lie algebra and  the corresponding notion of a $J$-quadratic Jordan algebra, slightly more general than the definition in \cite{Bloch}.\\
	         
	         Here we use the notation $\GL_n$ for the space of all $n\times n$ matrices $\Rr^{n\times n}$ endowed with the Lie algebra structure
	         given by the commutator product, and the notation $\Mm(n,\Rr)$ for the same space endowed with the Jordan algebra structure given by the
	         Jordan product.\\
	         
        
Let $J$ be an invertible matrix in $\Rr^{n\times n}$ such that $J=J^T$ or and $J=-J^T$.
	         
\begin{defn}
Given $J$ as above, a Lie subalgebra $\GG$ of $\GL_n$ is called a {\rm $J$-quadratic Lie algebra} if 
		\begin{equation}\label{eq:quadratic_alg1}
			W\in \GG \iff W^T J + J W = 0. 
		\end{equation}
\end{defn}

Related to a $J$-quadratic Lie algebra, we have the notion of $J$-quadratic Jordan algebra.

\begin{defn}
Given $J$ as above, a Jordan subalgebra $\Jj$ of $\Mm(n,\Rr)$ is called a {\rm $J$-quadratic Jordan algebra} if 
		\begin{equation}\label{eq:quadratic_alg1}
			W\in \Jj\iff W^T J - J W = 0.
		\end{equation}
\end{defn}

\begin{rem}
We observe that up to a change of coordinates, any $J$-quadratic Lie algebra in the hypothesis above is isomorphic to either $\SO_n,\SO_{p,q}$ or $\SP_n$. 
\end{rem}

\section{The $J$-HSS iteration}\label{sec:LJ}

In this Section we obtain a convergence result for a generalization of the HSS algorithm for solving linear systems which we call the $J$-HSS iteration;
it can be described as an \textit{alternating Lie-Jordan iteration}. 


\subsection{A general convergence result}


Let us consider the linear system of equations in $\Rr^n$:
\begin{equation}\label{eq:lin_syst}
Ax=b,
\end{equation}
where $A\in GL(n,\Rr)$.
For any $J$ invertible symmetric or skew-symmetric, we can define an involution on $\Rr^{n\times n}$ as $A\mapsto -JA^TJ^{-1}$.
This map is an isometry with respect to the positive definite form $\langle A,B\rangle = \Tr(A^TJ^TJB)$.
By Theorem 3.14 in \cite{Artin},  we have the following splitting:
\begin{equation}\label{eq:splitting}
\Rr^{n\times n} = \GG \oplus \Jj\,,
\end{equation}
where $\GG$ is the $J$-quadratic Lie algebra, corresponding to the eigenspace of the involution with eigenvalue $1$, and $\Jj$ the $J$-quadratic Jordan algebra, corresponding to the eigenspace of the involution with eigenvalue $-1$.
More explicitly, we can define  the $J$-symmetric part of $A$ by $H=\frac{1}{2}(A+JA^TJ^{-1})\in\Jj$ and  its $J$-skew-symmetric part by $S=\frac{1}{2}(A-JA^TJ^{-1})\in\GG$.\\

Consider the following alternating (two-step) scheme, which we refer to as the $J$-HSS iteration:
\begin{equation}\label{eq:HSS_alg}
\begin{cases}
(H+\alpha J^{-1})x^{k+\frac{1}{2}}&=(\alpha J^{-1}-S)x^{k}+b\\
(S + \alpha J^{-1})x^{k+1}&=(\alpha J^{-1}-H)x^{k+\frac{1}{2}}+b
\end{cases}  \quad 	(k=0,1,2,\ldots).
\end{equation}\\

\begin{prop}\label{convergence}
Let $A, J\in \Rr^{n\times n}$ be invertible. Write
$A= H+S$ where $H\in \Jj$ and $S\in \GG$, where $\Jj$ and $\GG$ are $J$-quadratic Jordan and Lie algebras (resp.) such that  (\ref{eq:splitting}) holds.
\begin{enumerate}
\item[(i)] If $J=J^T$, and the symmetric matrix $HJ$ is positive definite, then the iteration (\ref{eq:HSS_alg}) is well-defined and converges to the unique solution of $Ax=b$
for all $\alpha > 0$ and $x^0\in \Rr^n$.
\item[(ii)] If $J=-J^T$, and the symmetric matrix $SJ$ is positive definite, then the iteration (\ref{eq:HSS_alg}) is well-defined and converges to the unique solution of $Ax=b$
for all $\alpha > 0$ and $x^0\in \Rr^n$. In this case, $n$ is necessarily even.
\end{enumerate}
\end{prop}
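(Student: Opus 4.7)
The plan is to rewrite the two-step scheme (\ref{eq:HSS_alg}) in the single-step form $x^{k+1}=T(\alpha)x^{k}+c(\alpha)$, with iteration matrix
\[
T(\alpha) = (S+\alpha J^{-1})^{-1}(\alpha J^{-1}-H)(H+\alpha J^{-1})^{-1}(\alpha J^{-1}-S),
\]
and to establish that $\rho(T(\alpha))<1$ for every $\alpha>0$. A convenient first manipulation is to pull out the $J^{-1}$ factors: using $H+\alpha J^{-1}=(HJ+\alpha I)J^{-1}$ and the analogous identities for the other three factors, one obtains $T(\alpha)=J\,M(\alpha)\,J^{-1}$, where
\[
M(\alpha) = (SJ+\alpha I)^{-1}(\alpha I-HJ)(HJ+\alpha I)^{-1}(\alpha I-SJ).
\]
Since $T(\alpha)$ and $M(\alpha)$ are similar and $\rho(XY)=\rho(YX)$, one gets the product estimate
\[
\rho(T(\alpha)) \le \bigl\|(\alpha I-HJ)(HJ+\alpha I)^{-1}\bigr\|_{2} \cdot \bigl\|(\alpha I-SJ)(SJ+\alpha I)^{-1}\bigr\|_{2}.
\]

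Next I would exploit the algebraic structure of $H$ and $S$. After reducing $J$ to the canonical form discussed in section \ref{sec:quadratic}, one has $J^{2}=I$ in case (i) and $J^{2}=-I$ in case (ii). A short computation starting from the Jordan relation $H^{T}J=JH$ and the Lie relation $S^{T}J+JS=0$ then shows that in case (i) the matrix $HJ$ is real symmetric while $SJ$ is real skew-symmetric; in case (ii) the two roles are interchanged. In every case exactly one of the two bracketed factors in the bound is a rational function of a symmetric positive definite matrix, and by applying the spectral mapping theorem to the M\"obius map $x\mapsto(\alpha-x)/(\alpha+x)$ one sees that its spectrum lies in the open interval $(-1,1)$; being symmetric, this factor has operator $2$-norm equal to its spectral radius and is therefore a strict contraction. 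The remaining factor is a Cayley transform of a real skew-symmetric matrix, hence a real orthogonal matrix with $2$-norm exactly $1$. Multiplying the two bounds yields $\rho(T(\alpha))<1$.

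Finally I would check that the two half-step systems in (\ref{eq:HSS_alg}) are uniquely solvable for every $\alpha>0$: the coefficient matrix $H+\alpha J^{-1}=(HJ+\alpha I)J^{-1}$ is invertible because $-\alpha$ is neither a positive real number nor a nonzero imaginary number, so it cannot belong to the spectrum of $HJ$, and the same argument applies to $S+\alpha J^{-1}$. In case (ii) the existence of a real nonsingular skew-symmetric $J$ already forces $n$ to be even, since $\Det J = \Det(-J^{T}) = (-1)^{n}\Det J$ and $\Det J\neq 0$. The main obstacle in this plan is the step identifying the symmetric/skew-symmetric character of $HJ$ and $SJ$: for an arbitrary invertible symmetric or skew-symmetric $J$ these products need not be symmetric at all, so one must first normalize $J$ (or equivalently pass to the $M$-inner product with $M=J^{T}J$) before the classical Bai--Golub--Ng analysis of \cite{Bai} carries over with the Euclidean roles of $\mathcal{H}(A)$ and $\mathcal{S}(A)$ replaced by their $J$-symmetric and $J$-skew-symmetric counterparts $H$ and $S$.
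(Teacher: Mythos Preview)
Your argument is correct and coincides with the paper's \emph{second}, direct proof almost line by line: the paper also eliminates $x^{k+1/2}$, rewrites $T_\alpha$ in terms of $HJ$ and $SJ$, cycles the factors via similarity, and bounds $\rho(T_\alpha)$ by the product of $\|(\alpha I-SJ)(\alpha I+SJ)^{-1}\|_2=1$ (Cayley transform of a skew-symmetric matrix) and $\|(\alpha I-HJ)(\alpha I+HJ)^{-1}\|_2=\max_i|\alpha-\lambda_i|/|\alpha+\lambda_i|<1$. The paper's \emph{primary} proof is shorter still: the substitution $x=Jy$ turns (\ref{eq:HSS_alg}) into the classical HSS iteration applied to $AJy=b$ with splitting $AJ=HJ+SJ$, and Theorem~2.2 of \cite{Bai} then gives convergence in one stroke.

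Your ``main obstacle'' is less of an obstacle than you suggest. The proposition is stated (and used) only for the canonical $J$ of section~\ref{sec:quadratic}, i.e.\ $J$ orthogonal with $J^2=\pm I$; in that situation the Jordan relation $H^TJ=JH$ gives $H^T=JHJ^{-1}$ and hence $(HJ)^T=JH^T=J^2HJ^{-1}=HJ$ directly, with the analogous one-line checks for $SJ$ and for case~(ii). So no preliminary normalization or passage to the $M$-inner product is needed; the phrase ``the symmetric matrix $HJ$'' in the hypothesis is a genuine assertion, not an extra assumption. You are right that for a \emph{general} invertible symmetric $J$ the product $HJ$ need not be symmetric---but that generality is already disposed of in section~\ref{sec:quadratic} by congruence to $I_{p,q}$ or $\Sigma_n$, so it never enters the proposition.
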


\begin{proof}
The linear system $Ax = b$ can be rewritten as 
$$AJy = b, \quad  x=Jy\,.$$
Then
\begin{equation}\label{eq:hss_split}
AJ=(H+S)J = HJ + SJ\,.
\end{equation}
If $J=J^T$, then  $HJ =  \frac{1}{2}(AJ+JA^T)$ is
symmetric and $SJ=\frac{1}{2}(AJ-JA^T) $ is skew-symmetric, hence the
splitting (\ref{eq:hss_split}) is just the standard HSS splitting of $AJ$. Moreover, 
$H+\alpha J^{-1} = (HJ + \alpha I)J^{-1}$, hence $H+\alpha J^{-1}$ is invertible (as the product of invertible matrices)
when $HJ$ is positive definite; moreover, $S+\alpha J^{-1}=(SJ+\alpha I)J^{-1}$ is invertible for all $\alpha\ne 0$, since $SJ$
is skew-symmetric, and the iteration is well-defined. By Theorem 2.2 in \cite{Bai}, the sequence $y^k = J^{-1}x^k$ with
$x^k$ defined by iteration (\ref{eq:HSS_alg}) converges, for all $\alpha > 0$ and $y^0=J^{-1}x^0$, to the unique solution $y^*$ of 
$AJy = b$, hence $x^k$ converges to the unique solution $x^*$ of $Ax=b$.  This proves part (i). \\

To prove (ii), note that when $J=-J^T$ then $HJ$ and $SJ$ in (\ref{eq:hss_split}) are, respectively, skew-symmetric and symmetric.
The remainder of the proof is analogous to the one of part (i). Note that in this case $n$ must be even, since $J$ must be nonsingular.
\end{proof}

It is instructive to also prove Proposition \ref{convergence} directly, without relying on Theorem 2.2 in \cite{Bai}.
Let $J=J^T$ and assume $HJ$ is positive definite. Eliminating $x^{k+\frac{1}{2}}$ from  (\ref{eq:HSS_alg}) we can rewrite the $J$-HSS iteration as 
$x^{k+1}=T_\alpha x^k + c$, where
\[T_\alpha = (\alpha I+JS)^{-1}(\alpha I -JH)(JH+\alpha I)^{-1}(\alpha I-JS)
=(\alpha I+SJ)^{-1}(\alpha I -HJ)(HJ+\alpha I)^{-1}(\alpha I-SJ)\]
and 
\begin{equation}\label{eq:c}
c = 2\alpha (\alpha I+JS)^{-1}(JH+\alpha I)^{-1}Jb = 2\alpha J (SJ+\alpha I)^{-1} (HJ+\alpha I)^{-1}b.
\end{equation}
We notice that $T_\alpha$ is similar to 
\[\widehat{T}_\alpha = (\alpha I-HJ)(HJ+\alpha I)^{-1}(\alpha I-SJ)(\alpha I+SJ)^{-1},\]
hence the spectral radius of $T_\alpha$ is equal to that of $\widehat{T}_\alpha$.
Since $(SJ-\alpha I)(SJ+\alpha I)^{-1}$ is orthogonal as the Cayley transform of a skew-symmetric matrix, we get
\[
\varrho(T_\alpha) = \varrho (\widehat T_\alpha) 
\le \|\widehat T_\alpha\|_2 \le \|(\alpha I-HJ)(\alpha I+HJ)^{-1})\|_2 = \max_{1\leq i\leq n}\dfrac{|\alpha-\lambda_i(HJ)|}{|\alpha+\lambda_i(HJ)|} < 1,\]
where $\lambda_i(HJ)$ denotes the $i$-th eigenvalue of the symmetric matrix $HJ$. Hence the iteration converges, for all $\alpha > 0$ and all $x^0$.
Moreover, the upper bound $ \max_{1\leq i\leq n}\dfrac{|\alpha-\lambda_i(HJ)|}{|\alpha+\lambda_i(HJ)|}$ is minimized by
$$\alpha_* = \sqrt{\lambda_{\min} (HJ) \lambda_{\max} (HJ)}.$$
The case $J=-J^T$ is analogous, interchanging the roles of $HJ$ and $SJ$. \\

\begin{rem}
When $J=I_n$, the $J$-HSS iteration reduces to the standard HSS iteration  (\ref{HSS_iteration}) introduced in \cite{Bai}.
Also note that the $J$-HSS iteration is mathematically equivalent to the standard HSS iteration applied to the linear system $AJy=b$,
$y=J^{-1}x$, when the splitting $A=H+S$ is a Lie-Jordan splitting with $S\in \GG$ and $H\in \Jj$, where $\GG$ and $\Jj$ are,
respectively, $J$-quadratic Lie and Jordan algebras. \\
\end{rem}

\begin{rem}
The first half-step of the $J$-HSS iteration requires solving linear systems with coefficient matrix $H + \alpha J^{-1}$.   Writing $H+\alpha J^{-1} = (HJ+\alpha I)J^{-1}$,
this reduces to solving a system with symmetric positive definite coefficient matrix $HJ+\alpha I$ followed by multiplication by $J$.  Since $J$ is usually of very simple form,
the cost of the first half-step is essentially identical to that for the standard HSS method.   Likewise, the second half-step requires solving systems with coefficient matrix
$SJ + \alpha I$, where $SJ$ is skew-symmetric, followed by a matrix-vector product with $J$, again essentially identical to the standard HSS iteration in terms of cost.
 \\
\end{rem}

\begin{rem}
The single-step iteration $x^{k+1} = T_\alpha x^k + c$ can also be written as 
$$x^{k+1} = (I - P_\alpha^{-1}A)x^k + P_\alpha^{-1}b$$
with 
\begin{equation}\label{eq:prec}
P_\alpha^{-1} =  2\alpha J (SJ+\alpha I)^{-1} (HJ+\alpha I)^{-1}b,  \quad \mbox{or} \quad 
P_\alpha = \frac{1}{2\alpha} (HJ+\alpha I)(SJ +\alpha I)J^{-1}\,,
\end{equation}
see for instance \cite{BS97}.
We refer to the matrix $P_\alpha$ as the {\rm{$J$-HSS preconditioner}}. Its application requires the solution of two
shifted linear systems, one of which is symmetric positive definite and the other shifted skew-symmetric, and a matrix-vector product with $J$. The
scalar factor $\frac{1}{2\alpha}$ is immaterial for preconditioning, and it can be ignored.
Since the matrix $J$ is usually of very simple form, the
cost is essentially the same as for the standard HSS preconditioner.  When used to accelerate the convergence of
Krylov subspace methods, exact solves are usually replaced with inexact ones (see, e.g., \cite{Bai2, Benzi2004}). \\
\end{rem}

\begin{rem}
We mention that in \cite{Greif2022}, the author has proposed two variants of the HSS scheme that bear some resemblance to the $J$-HSS
method, but should not be confused with it.  The first variant uses the (standard) HSS splitting $A=H+S$ and considers the iteration obtained
by replacing shifts of the form $\alpha I$ with $\alpha J$ where $J=\Sigma_n$, with $n$ even.  In the second variant the symmetric
matrix $H$  is shifted by $\alpha I$ in the first half step, while the skew-symmetric matrix $S$ is shifted by $\alpha J$ in the second half step
(here, again, $J=\Sigma_n$). For these iterations to be well-defined, restrictions must be imposed on the parameter $\alpha$ or on the matrix $S$;
moreover, convergence conditions appear to be difficult to establish, see \cite{Greif2022}.
\end{rem}


In the next two subsections we discuss the two choices of the matrix $J$ from subsections \ref{pseudo-HSS} and \ref{HSH},
which provide ``canonical' examples of the two cases covered in Proposition \ref{convergence}.

\subsection{Pseudo-HSS method.}\label{sec:phss}

When $J= I_{p,q}$ (see \ref{Ipq}), we refer to the $J$-HSS iteration as the \textit{pseudo-HSS} iteration. 
Here we assume that $p\ne0$, $q\ne 0$, for otherwise we revert to the standard HSS iteration.
As shown in subsection \ref{pseudo-HSS}, every ${\cal A}\in \Rr^{n\times n}$ (with $n=p+q$) can be split as
$$
  {\mathcal A} = \left[ {\begin{array}{*{20}{c}}
				A & B \\
				C  & D
		\end{array}}\right ] = \left[ {\begin{array}{*{20}{c}}
		\frac{1}{2}(A+A^T)  & \frac{1}{2}(B-C^T)\vspace{0.1in}  \\
				-\frac{1}{2}(B^T-C)  & \frac{1}{2}(D+D^T)
		\end{array}}\right ] +
		 \left[ {\begin{array}{*{20}{c}}		
				\frac{1}{2}(A-A^T)  & \frac{1}{2}(B+C^T) \vspace{0.1in} \\
				\frac{1}{2}(B^T+C)  & \frac{1}{2}(D-D^T)
		\end{array}}\right ]   =  H+ S\,.
		$$
		
		We have
		
		\begin{equation}\label{HJ_a}
		HJ =\left[ {\begin{array}{*{20}{c}}
		\frac{1}{2}(A+A^T)  & -\frac{1}{2}(B-C^T)\vspace{0.1in}  \\
				-\frac{1}{2}(B^T-C)  & -\frac{1}{2}(D+D^T)
		\end{array}}\right ] ,
		\end{equation}
		a symmetric matrix. For this matrix to be positive definite, it is necessary that both $\frac{1}{2}(A+A^T) $ and $ -\frac{1}{2}(D+D^T)$ be positive definite. 
A necessary and sufficient condition is that both $\frac{1}{2}(A+A^T) $ and the Schur complement matrix

\begin{equation}\label{Schur_a}
-\frac{1}{2}(D+D^T) - \frac{1}{2}(B^T-C)(A+A^T) ^{-1}( B-C)^T
\end{equation}
be positive definite.  In general, this condition is not easy to check.  An exception is when $C=B^T$, in which case 
$$HJ =\left[ {\begin{array}{*{20}{c}}
		\frac{1}{2}(A+A^T)  & 0 \vspace{0.1in}  \\
                                0             & -\frac{1}{2}(D+D^T)
		\end{array}}\right ] ,
	 $$
	  a block diagonal matrix, which is positive definite when both $A$ and $-D$ are positive real. In this case we also have
	$$SJ =\left[ {\begin{array}{*{20}{c}}
		\frac{1}{2}(A-A^T)  & -B \vspace{0.1in}  \\
                                B^T             & -\frac{1}{2}(D-D^T)
		\end{array}}\right ] .
	 $$
	 
	 Hence, when $B=C^T$ he two half-steps of the pseudo-HSS iteration require solving shifted linear systems with coefficient matrices given by
	 
	 \begin{equation}\label{PHSS_matrix1}
	 HJ+\alpha I = \left[ {\begin{array}{*{20}{c}}
		\frac{1}{2}(A+A^T) + \alpha I  & 0 \vspace{0.1in}  \\
                                0             & \alpha I -\frac{1}{2}(D+D^T)
		\end{array}}\right ] ,
		\end{equation}
		and
	\begin{equation} \label{PHSS_matrix2}
	SJ+\alpha I = \left[ {\begin{array}{*{20}{c}}
		\frac{1}{2}(A-A^T) +\alpha I  & -B \vspace{0.1in}  \\
                                B^T             & \alpha I -\frac{1}{2}(D-D^T)
		\end{array}}\right ] .
          \end{equation}
          
          If in addtition $D=D^T$, as is frequently the case in the solution of saddle point problems (see \cite{BGL05}), we have
		\begin{equation}\label{PHSS_matrix1b}
	 HJ+\alpha I = \left[ {\begin{array}{*{20}{c}}
		\frac{1}{2}(A+A^T) + \alpha I  & 0 \vspace{0.1in}  \\
                                0             & \alpha I - D
		\end{array}}\right ] ,
		\end{equation}
		a symmetric positive definite, block diagonal matrix for all $\alpha > 0$, and
	\begin{equation} \label{PHSS_matrix2b}
	SJ+\alpha I = \left[ {\begin{array}{*{20}{c}}
		\frac{1}{2}(A-A^T) +\alpha I  & -B \vspace{0.1in}  \\
                                B^T             & \alpha I 
		\end{array}}\right ] .
          \end{equation}
          In this special case, the resulting iterative method is essentially the one introduced in \cite{Benzi2004}.
          Systems with coefficient matrix given by (\ref{PHSS_matrix1b}) can be solved by any solver 
          for symmetric positive definite systems.  The shifted skew-symmetric systems with coefficient matrix
          (\ref{PHSS_matrix2b}) can be reduced, via Schur complement reduction, to solving $p\times p$ systems
          with matrix $\frac{1}{2}(A-A^T)  + \alpha I + \frac{1}{\alpha}BB^T$, for which several specialized solvers exist;
          see, for instance, \cite{Szyld2022} and \cite{BF23}. \\
          
          We note that while the partitioning of $n$ as $n=p+q$ is in principle arbitrary, in practice the choice of $p$ and $q$
          may be induced by a natural partitioning of the variables, as in saddle point problems; for example, in fluid flow problems
          $p$ could be the number of velocity degrees of freedom and $q$ the number of pressure degrees of freedom.  In other
          cases the choice of $p$ and $q$ may be dictated by computational considerations.

	 \subsection{Hamiltonian-HSS method.}\label{sec:hhss}
	 When $n$ is even and $J=\Sigma_n$, we refer to the $J$-HSS iteration as the \textit{Hamiltonian-HSS} iteration. 
As shown in subsection \ref{HSH}, every ${\cal A}\in \Rr^{n\times n}$ can be split as
$$		{\mathcal A} = \left[ {\begin{array}{*{20}{c}}
				A & B \\
				C  & D
		\end{array}}\right ] = 
		\left[ {\begin{array}{*{20}{c}}		
		\frac{1}{2}(A-D^T)  & \frac{1}{2}(B+B^T) \vspace{0.1in} \\ 			
				\frac{1}{2}(C+C^T)  & -\frac{1}{2}(A^T-D)
		\end{array}}\right ]   +		
		\left[ {\begin{array}{*{20}{c}}
		\frac{1}{2}(A+D^T)  & \frac{1}{2}(B-B^T)\vspace{0.1in}  \\ 				
				\frac{1}{2}(C-C^T)  & \frac{1}{2}(A^T+D)
		\end{array}}\right ]  =  S+H\,,
$$
with $S\in \SP_n$ (Hamiltonian) and  $H\in \JJ_n$ (skew-Hamiltonian).\\

We have

$$HJ = \left[ {\begin{array}{*{20}{c}}
		-\frac{1}{2}(B-B^T)  & \frac{1}{2}(A+D^T)\vspace{0.1in}  \\ 				
			-\frac{1}{2}(A^T+D)  &  \frac{1}{2}(C-C^T)
		\end{array}}\right ],
$$
a skew-symmetric matrix, and

$$ SJ =  \left[ {\begin{array}{*{20}{c}}
		-\frac{1}{2}(B+B^T)  & \frac{1}{2}(A-D^T)\vspace{0.1in}  \\ 				
			\frac{1}{2}(A^T-D)  &  \frac{1}{2}(C+C^T)
		\end{array}}\right ],
$$
a symmetric matrix, which must be positive definite for the Hamiltonian-HSS method to converge for all $\alpha > 0$.
In particular, a necessary condition is that $B$ and $C$ have, respectively, negative definite and positive definite symmetric part.
In the special case in which $B=B^T$, $C=C^T$ and $A=D^T$ these matrices reduce to
$$HJ = \left[ {\begin{array}{*{20}{c}}
		0  & A\vspace{0.1in}  \\ 				
                -A   &  0
		\end{array}}\right ],
$$
and

$$ SJ =  \left[ {\begin{array}{*{20}{c}}
		-B  & 0 \vspace{0.1in}  \\ 				
	          0 &  C
		\end{array}}\right ],
$$
and the Hamiltonian-HSS scheme reduces to a special case of the standard HSS method, which converges for all $\alpha > 0$ if 
$B$ is negative and $C$ is positive definite.  When $C=-B$ and $A=D$ we have
$${\cal A} =  \left[ {\begin{array}{*{20}{c}}
				A & B \\
				-B  & A
		\end{array}}\right ] ,
		$$
which is one of the real forms of the complex matrix $C=A+i B$. In this case we have
$${\cal A} J=  \left[ {\begin{array}{*{20}{c}}
				-B & A \\
				-A  & -B
		\end{array}}\right ] = HJ + SJ,
		$$
where
$$HJ = \left[ {\begin{array}{*{20}{c}}
		-\frac{1}{2}(B-B^T)  & \frac{1}{2}(A+A^T)\vspace{0.1in}  \\ 				
			-\frac{1}{2}(A+A^T)  &- \frac{1}{2}(B-B^T)
		\end{array}}\right ],
$$
a skew-symmetric Hamiltonian matrix, and

$$ SJ =  \left[ {\begin{array}{*{20}{c}}
		-\frac{1}{2}(B+B^T)  & \frac{1}{2}(A-A^T)\vspace{0.1in}  \\ 				
			-\frac{1}{2}(A-A^T)  & - \frac{1}{2}(B+B^T)
		\end{array}}\right ]
$$
a symmetric skew-Hamiltonian matrix. If in addition $A=A^T$, $SJ$ becomes block diagonal
and the Hamiltonian-HSS iteration converges if $-B$ is positive real, a condition often satisfied 
in applications; see, e.g., \cite{BBC10,BBCW,BB08}.

\section{Further discussion of alternating iterations}\label{sec:6}

In this Section we give additional details on the  triangular and skew-symmetric splitting iteration (following
\cite{Bai_et_al}, see also \cite{BaiBook}) and we discuss a further class of splitting methods obtained by
linearizing the Kronecker product decomposition. The latter class includes the well-known Alternating Direction
Implicit (ADI) method as a special case.

\subsection{Convergence of the triangular and skew-symmetric splitting iteration} \label{subsec:STS}

Similar to the HSS method, the authors of \cite{Bai_et_al} have proposed an alternating iteration based
on the triangular and skew-symmetric splitting of $A$. If $A= L_0 + D + U_0$ is the usual splitting of $A$
into its strictly lower triangular, diagonal and strictly upper triangular parts (or their block analogues), we
can write $A = U + S $  with $U = U_0 + D + L_0^T$ (upper triangular) and $S = L_0 - L_0^T$ (skew-symmetric).   
One can then define the
following alternating iteration:
\begin{equation}\label{STS_iteration}
		\begin{cases}
			(\alpha I_n + U ) x^{(k+\frac{1}{2})}=(\alpha I_n - S) x^{(k)}+b\\
			(\alpha I_n + S ) x^{(k+1)} = (\alpha I_n - U) x^{(k+\frac{1}{2})}+b
		\end{cases}\quad (k=0,1,2,\ldots),
	\end{equation}
	where $\alpha > 0$.  Note hat the first subsystem in (\ref{STS_iteration}) is triangular, and therefore easy to solve, while the coefficient
	matrix subsystem is a shifted skew-symmetric matrix, as in the HSS method.
	The convergence analysis of this method is very similar to that of the HSS iteration.
	In particular, the method is unconditionally convergent if $A$ is positive definite.
	Indeed, the iteration matrix $T_\alpha$ of this algorithm is similar to
	$$\hat T_\alpha =  (\alpha I_n - U)(\alpha I_n + U)^{-1} (\alpha I_n - S)(\alpha I_n + S)^{-1}$$
	so that for all $\alpha > 0$ we have
	$$\rho (T_\alpha) = \rho(\hat T_\alpha) \le \|(\alpha I_n - U)(\alpha I_n + U)^{-1}\|_2 \| (\alpha I_n - S)(\alpha I_n + S)^{-1}\|_2 < 1.$$
	The contractivity of $(\alpha I_n - U)(\alpha I_n + U)^{-1}$ follows from the fact that $U$ is positive definite, since 
	$$\frac{1}{2}(U + U^T )= \frac{1}{2}(U_0 + D + L_0^T + U_0^T + D +L_0 )= \frac{1}{2}(A+A^T).$$
	Kellogg's Lemma (see, e.g, \cite{Marchuk}) insures that  $\|(\alpha I_n - U)(\alpha I_n + U)^{-1}\|_2 < 1$; see also \cite{Bai_et_al}.\\
	
	As already observed in Sections \ref{sec:ZS}-\ref{sec:Iwasawa}, the underlying Lie algebra decomposition
	$$ \GL_n = \SO_n \oplus \UT_n = \SO_n \oplus \DD_0 \oplus \UT_0  $$
	 is the linearization of the QR (or QDR) factorization of the
	general linear group $GL(n, \Rr)$, a special case of the Zappa--Sz\'ep (or of the Iwasawa) decomposition.
	Unlike the Lie algebra decompositions considered in the previous Section, this is {\em not} a quadratic algebra
	decomposition.
	
	\subsection{ADI-like splittings} \label{sec:ADI}
	
	Let now $A, B\in \Rr^{n\times n}$ and consider the $n^2\times n^2$ matrix $M = A\otimes I_n + I_n\otimes B$, the 
	{\em Kronecker sum} of $A$ and $B$. The assumption that $A$ and $B$ are of the same size is not necessary, and is made here only for notational simplicity.
	To solve the linear system $M x = b$, let $\alpha > 0$ and consider the alternating
	iteration
	\begin{equation}\label{ADI_iteration}
		\begin{cases}
			(\alpha I_{n^2} + A\otimes I_n ) x^{(k+\frac{1}{2})}=(\alpha I_{n^2} - I_n\otimes B) x^{(k)}+b\\
			(\alpha I_{n^2} + I_n\otimes B ) x^{(k+1)} = (\alpha I_{n^2} - A\otimes I_n) x^{(k+\frac{1}{2})}+b
		\end{cases}\quad (k=0,1,2,\ldots).
	\end{equation}

	Note that the second of the two subystems in (\ref{ADI_iteration}) is block diagonal (with all diagonal blocks equal to $B$), 
	while the first one can be symmetrically permuted to a system of the same type. 
	The classical Alternating Direction Implicit (ADI) method, see for instance \cite{Marchuk,PR,Varga}, is of this form.  When applied to the standard
	five-point difference approximation of the Poisson equation on a rectangular region (with an equal number of mesh points in each direction),
	$A = B$ is just the tridiagonal approximation of the 1D Laplacian and $M$ is the discrete  2D Laplacian. As is well-known, for this problem the ADI method is
	unconditionally convergent.\\
	
	More generally, it is straightforward to check that iteration (\ref{ADI_iteration}) is unconditionally convergent to the solution of $M x = b$ if $A$ and $B$
	are positive semidefinite (not necessarily symmetric) with at least one of them positive definite, in which case $M$ is also necessarily positive definite.
	Indeed, the iteration matrix of (\ref{ADI_iteration}) is similar to 
	$$ (\alpha I_{n^2} - A\otimes I_n)(\alpha I_{n^2} + A\otimes I_n)^{-1} (\alpha I_{n^2} - I_n\otimes B )(\alpha I_{n^2} + I_n\otimes B)^{-1}$$
	and under the stated assumptions we have, for all $\alpha >0 $, that
	$$\|(\alpha I_{n^2} - A\otimes I_n)(\alpha I_{n^2} + A\otimes I_n)^{-1}\|2 \le 1 \quad {\rm and} \quad
	\|(\alpha I_{n^2} - I_n\otimes B )(\alpha I_{n^2} + I_n\otimes B)^{-1}\|_2 \le 1,$$
	with at least one of the two norms being $< 1$, since $A\otimes I_n$ and $I_n \otimes B$ inherit the positive (semi)definiteness properties of $A$ and $B$.\\
	
	The alternating iteration (\ref{ADI_iteration}) is obtained from the natural splitting
	\begin{equation}\label{ADI_split}
	M = (A\otimes I_n) + (I_n\otimes B),
	\end{equation}
	and it is natural to ask whether this splitting corresponds to a linearization of a Lie group decomposition. Not surprisingly, the answer is affirmative and the
	corresponding group factorization is the Kronecker (or tensor) product decomposition. This decomposition differs significantly from the ones previously considered
	in this paper since it is not a global decomposition of the group $GL(n^2, \Rr)$ but only of a particular subgroup of it, namely, 
	\begin{equation} \label{Kron_prod}
	G = GL(n,\Rr)\otimes GL(n,\Rr) = \{ A\otimes B\ |\,  A, B \in GL(n,\Rr)\}.
	\end{equation}
	This is to be expected, since not every $n^2\times n^2$ real matrix is of the form (\ref{ADI_split}). 
	The fact that $G$ is a subgroup of $GL(n^2,\Rr)$ follows immediately from the properties of the Kronecker (or tensor) product of matrices.  The identity element
	is $I_{n^2} = I_n \otimes I_n$ and the inverse of $A\otimes B$ is $A^{-1}\otimes B^{-1}$. Moreover, $(A\otimes B)\cdot (C\otimes D) = (AB)\otimes (CD)$.
	The dimension of $G$ as a Lie group is $2n^2 - 1$.   To see this, observe that the group $G$ can also be written as
	\begin{equation} \label{Kron_gr}
	G = GL(n,\Rr) \otimes GL(n,\Rr)/\Rr^*
	\end{equation}
	Indeed, any matrix
	of the form $A\otimes B$ with $A\in GL(n,\Rr)$ and $B\in GL(n,\Rr)$ can be written equivalently as $A_1 \otimes B_1$ with  $A_1\in GL(n,\Rr)$ and $B_1$ a matrix
	of determinant $\pm 1$, owing to the identity
	$$ A\otimes B = \left (t A\right )\otimes  \left (t^{-1} B \right ), \quad  t\in \Rr^*\,.$$
	Note that the ordering chosen for the factors is arbitrary (we could have equally well picked $A$ with $\Det A = \pm 1$ and $B\in GL(n,\Rr)$). 
	Since $GL(n,\Rr)$ has dimension $n^2$ and $GL(n,\Rr)/\Rr^*$ has dimension $n^2-1$, $G$  has dimension $2n^2 - 1$.\\

Next, we consider the linearization of the Kronecker product near the identity. 
Let $t\mapsto A(t)$ and $t \mapsto B(t)$ be smooth paths in $GL(n,\Rr)$ and $SL(n,\Rr)$, respectively, such that 
$A(0) = I_n = B(0)$. Then clearly $t\mapsto M(t) = A(t) \otimes B(t)$ is a smooth path in $G$, such that $M(0) = I_{n^2}$, and differentiating at $t=0$ leads to
$$ M'(0) = A'(0) \otimes I_n + I_n\otimes B'(0), $$
with $A'(0) \in \GL_n$ and $B'(0)\in \SL_n$. Hence, the tangent space of the Lie group $G$ at the identity $I_{n^2}$ is precisely the Lie algebra of all 
matrices that are Kronecker sums of two matrices:
\begin{equation}\label{Kron_split}
\GG = \{ M \in \GL_{n^2}\, | \, M = A\otimes I_n + I_n \otimes B \,, A \in \GL_n, \, B\in \SL_n\},
\end{equation}
the dimension of which (as a vector space) is clearly $2n^2 -1$, as it should be.\footnote{This Lie algebra is identical to the Lie algebra of all matrices
of the form $M = A\otimes I_n + I_n \otimes B \,, A \in \GL_n, \, B\in \GL_n$, as the trivial identity
$$ A\otimes I_n + I_n \otimes B = (A + \Tr(B) I_n) \otimes I_n + I_n\otimes  (B - \Tr(B) I_n)$$
shows.} We note in passing that the fact that $\GG$ is the Lie algebra of the Lie group $G$ also follows easily from the well-known identity
$$ \exp(A\otimes I_n + I_n \otimes B) = \exp(A)\otimes\exp(B),$$
valid for any $A$ and $B$. The Lie algebra (\ref{Kron_split}) naturally splits as the orthogonal  direct sum of two subspaces:
\begin{equation} \label{Kron_al1}
 \GG =  \GG_1 \oplus  \GG_2,
\end{equation}
where 
$$\GG_1 = \{A\otimes I_n\, |\, A\in \GL_n\}, \quad  \GG_2 = \{I_n\otimes B\, |\, B \in \SL_n\},$$
the first of dimension $n^2$, the second of dimension $n^2 - 1$.  The fact that $\GG_1 \perp \GG_2$
follows from the well-known property
$$ \Tr (A\otimes B) = \Tr(A) \Tr(B),$$
which in turn implies that the Frobenius inner product of a $A\otimes I_n$ and $I_n\otimes B$ is zero
if one of $A$ or $B$ has zero trace. \\

With a slight abuse of notation we can also write the splitting (\ref{Kron_al1}) as
\begin{equation}\label{Kron_al}
\GG = (\GL_n \otimes I_n) \oplus  (I_n \otimes \SL_n),
\end{equation}
stating that $\GG$ is the {\em Kronecker sum} of the Lie algebras $\GL_n$ and $\SL_n$.
This Lie algebra splitting is the infinitesimal counterpart of the Lie group decomposition (\ref{Kron_gr}). \\

Using the representation (\ref{Kron_prod}) of $G$ leads to a different splitting of the Lie algebra $\GG$, namely,
\begin{equation}\label{Kron_al}
\GG =  \GG_1 + \GG_3 = (\GL_n \otimes I_n) +  (I_n \otimes \GL_n),
\end{equation}
which is no longer a direct sum decomposition since $\GG_1 \cap \GG_3$ consists of all matrices of the form $k I_{n^2}$
with $k \in \Rr$. 
Nevertheless, this somewhat redundant representation of the tangent space of $G$ is the one which more closely corresponds
to the splitting (\ref{ADI_split}) underlying the ADI-like methods.

 	 \title{Nomenclature of decompositions, factorizations and splittings, with the associated linear systems algorithm}
\begin{table}
  \caption{Nomenclature of main decompositions, factorizations and corresponding splittings, 
  with the associated iteration.}
  \centering 
  \begin{threeparttable}
    \begin{tabular}{lccc}
	   Lie group decomp.  & Matrix factorization & Lie algebra splitting & lteration \\
     \midrule\midrule

$\begin{aligned}
& \mbox{Global Cartan-like}\\
 \end{aligned}  
  $ &   
$\begin{aligned}
&
&\mbox{Polar}\\
 \end{aligned} $   &   $\begin{aligned}
&{}\\
&\mbox{$J$-symm.~and $J$-skew symm.}\\
&\mbox{}
 \end{aligned} $ &   $\begin{aligned}
&{}\\
&\mbox{$J$-HSS}\\
&\mbox{}
 \end{aligned} $  \\
 \cmidrule(l  r ){1-4}
   $\begin{aligned}

& \mbox{Zappa-Sz\'ep}\\
& \mbox{Iwasawa (KAN)}
 \end{aligned}
$   &   
$\begin{aligned}
&\mbox{QR}\\\
&\mbox{QDR}
 \end{aligned} $   &   $\begin{aligned}
&\mbox{skew-symm.~and upper triangular}\\
&\mbox{skew-symm., diag.~and str.~upper tr.}\\  
 \end{aligned} $ &   $\begin{aligned}
 &{}\\
 &\mbox{STS}\\
&\mbox{MSTS}\\
&{}\\
 \end{aligned} $ 
 \\     
    \cmidrule(l  r ){1-4}      
$\begin{aligned}
&\mbox{(Local) Doolittle}\\
&\mbox{(Local) Crout} \\
&\mbox{(Local) LDU}
 \end{aligned}
$   &   
$\begin{aligned}
&\mbox{(LD)U}\\
&\mbox{L(DU)}\\
&\mbox{LDU} \\
\end{aligned} $   &   $\begin{aligned}
&\mbox{lower tr.~and str.~upper tr.}\\
&\mbox{upper tr.~and str.~lower tr.}\\
&\mbox{diag.~and str.~lower-upper tr.}\\
 \end{aligned} $ &   $\begin{aligned}
&{}\\
&\mbox{Gauss-Seidel}\\
&\mbox{rev.~Gauss-Seidel}\\
&\mbox{Jacobi} \\
&{}\\
 \end{aligned} $ 
 \\
\cmidrule(l  r ){1-4} 
$\begin{aligned}
&{}\\
& \mbox{Kronecker product}
 \end{aligned} $ & $\begin{aligned}
&{}\\
& \mbox{$A\otimes B$}\\
 \end{aligned} $   &   $\begin{aligned}
&{}\\
& \mbox{Kronecker sum}\\
 \end{aligned} $ &   $\begin{aligned}
&\mbox{}\\
& \mbox{ADI-like}
 \end{aligned} $  
             \\
             \\
    \midrule\midrule
    \end{tabular}
\end{threeparttable}
  \end{table}

\section{Conclusions and future work}\label{sec:7}

In this paper we have shown that many (well-known and less well-known) matrix splittings can be interpreted
as linearizations of matrix factorizations using basic notions from the theory of Lie groups and their Lie and Jordan algebras. 
We have also introduced the notion of Lie-Jordan splitting and used it to investigate a generalization of the HSS iteration,
which we call the $J$-HSS iteration, for solving  block-structured linear systems.  Possible applications of this technique include
the development of solvers and preconditioners for saddle-point problems and for complex linear systems in real equivalent form. 
Further exploration of these techniques could be the subject of future work.\\

In Table 1 we summarize the main group decompositions and corresponding Lie algebra splittings considered
in this paper.
Our list of splittings (and the corresponding factorizations) is by no means exhaustive, and many more such relationships
remain to be explored; for instance, the recent paper \cite{Edelman2} describes fifty-three different matrix factorizations,
some of which may correspond to interesting matrix splittings not previously considered. Also, the investigation
of block factorizations and the associated block splittings  is a possible avenue for future work.

\section*{Acknowledgements}  This paper is dedicated to Daniel Szyld, who has made major contributions to the theory and application of 
matrix splittings, Krylov subspace methods, preconditioning, and many other topics. The first author would like to thank Daniel for his warm friendship
and for all his advice and support over the last 30 years.  We re also grateful to two anonymous reviewers and to Angelo Vistoli for helpful
suggestions.

\section*{Funding}
The authors acknowledge funding from INdAM-GNCS (Project ``Metodi basati su matrici e tensori strutturati per problemi di algebra lineare di grandi
dimensioni"). The first author also acknowledges support from MUR (Ministero dell'Universit\`a e della Ricerca)
through the PRIN Project 20227PCCKZ
(``Low Rank Structures and Numerical Methods in Matrix and Tensor Computations and their
Applications").

\section*{References}

\end{document}